\documentclass{article}
\usepackage[margin=1in]{geometry}
\usepackage{graphicx}
\usepackage{amsmath}
\usepackage{amsfonts}
\usepackage{amssymb}
\usepackage{amsthm}
\usepackage[nocompress]{cite}
\usepackage{xcolor}
\usepackage{algorithm}
\usepackage{algorithmic}
\usepackage{footnote}
\usepackage{multirow}
\usepackage{hyperref}
\usepackage{overpic}
\usepackage[capitalise]{cleveref}
\usepackage{bm}
\usepackage{bbm}
\usepackage{stmaryrd}
\usepackage{framed}

\usepackage{tikz}
\usepackage{pgfplots}
\pgfplotsset{compat=1.18}
\usetikzlibrary{calc,external,patterns}
\usepgfplotslibrary{fillbetween}
\crefformat{equation}{(#2#1#3)}
\crefrangeformat{equation}{(#3#1#4) to~(#5#2#6)}
\crefmultiformat{equation}{(#2#1#3)}{ and~(#2#1#3)}{, (#2#1#3)}{ and~(#2#1#3)}
\newtheorem{theorem}{Theorem}[section]
\newtheorem{remark}[theorem]{Remark}

\newtheorem{proposition}[theorem]{Proposition}

\crefname{remark}{Remark}{Remarks}
\crefname{lemma}{Lemma}{Lemmas}
\crefname{proposition}{Proposition}{Propositions}
\crefname{corollary}{Corollary}{Corollaries}

\newcommand{\norm}[1]{\left\|#1\right\|}
\newcommand{\jump}[1]{\left\llbracket#1\right\rrbracket}

\newcommand{\bR}{\mathbb{R}}

\newcommand{\vf}{\mathbf{g}_N}
\newcommand{\vfcorr}[1]{\widetilde{\mathbf{g}}_{N,#1}}
\newcommand{\vforce}{\mathbf{f}}
\newcommand{\vg}{\mathbf{g}_D}
\newcommand{\vgcorr}[1]{\widetilde{\mathbf{g}}_{D,#1}}
\newcommand{\vn}{\mathbf{n}}
\newcommand{\vx}{\mathbf{x}}
\newcommand{\vu}{\mathbf{u}}
\newcommand{\vv}{\mathbf{v}}

\newcommand{\vphi}{\bm\phi}

\newcommand{\vzero}{\mathbf{0}}

\newcommand{\uE}{{\bm\varepsilon}}
\newcommand{\uI}{\textup{I}}

\newcommand{\uT}{{\bm\sigma}}
\newcommand{\mcK}{\mathcal{K}}
\newcommand{\mcV}{\mathcal{V}}

\newcommand{\dw}{\partial \Omega}
\newcommand{\prodscal}[2]{\left(#1,#2\right)}

\usepackage{fancyhdr}
\title{Accelerating droplet-laden Stokes flow simulations with hierarchical surrogate modeling}
\newcommand{\email}[1]{\protect\href{mailto:#1}{#1}}
\author{Davide Pradovera\thanks{Stockholm University, Stockholm, Sweden (\email{davide.pradovera@math.su.se}).}
\and Thomas Frachon\thanks{KTH Royal Institute of Technology, Stockholm, Sweden (\email{frachon@kth.se}, \email{sara.zahedi@math.kth.se}).}
\and Sara Zahedi\footnotemark[2]}

\begin{document}

\maketitle

\begin{abstract}
    We present a surrogate modeling strategy for Stokes flows with liquid droplets suspended in a carrier fluid. Our approach is based on a multi-fidelity framework. At the lowest fidelity, droplets are treated as passive tracers, neglecting their influence on the ambient flow field. Building on this approximation, we derive a PDE that represents the current modeling error. This error equation is then solved approximately to correct the flow field and the procedure is iterated. Two fidelities are employed in an alternating fashion: Stokes flow in the absence of droplets and flow around a single droplet in free space. By systematically combining these models, the method captures droplet-flow, droplet-boundary, and droplet-droplet interactions. For geometrically similar droplets, we further develop an efficient offline-online strategy that exploits this structure by reusing precomputed single-droplet solutions. Numerical experiments demonstrate the accuracy and efficiency of the proposed surrogate in a variety of tests, including scenarios with up to $10^4$ droplets. Notably, we show that the proposed surrogate achieves substantially reduced computational cost compared to fully resolved multi-fluid simulations with state-of-the-art software.
\end{abstract}

\section{Introduction}
Multiphase flow systems arise in a wide range of applications, including microfluidics, porous media, ink-jet printing and biomedical applications. Accurate predictions for such systems increasingly rely on high-fidelity computer simulations\cite{Sethian, WOR12, Porous24, Revnummeth25}. In this work, we study incompressible multiphase Stokes flow involving a large number of immiscible droplets suspended in a carrier fluid.

From a computational point of view, a key difficulty arises when droplets are many and become highly concentrated or approach boundaries. Standard discretization techniques can only capture the dynamics in such regimes by resolving the narrow interfacial gaps and near-boundary interactions, making the resulting numerical schemes computationally demanding.

For example, standard finite-element methods rely on fine meshes that conform to the interfaces and therefore require repeated remeshing as the interfaces evolve. Unfitted methods such as CutFEM avoid remeshing but still require fine local resolution to accurately capture narrow gaps \cite{GrosReu2011, acta25}. Boundary integral methods face similar challenges, requiring specialized quadrature schemes and fine resolution around interfaces to accurately evaluate near-singular integrals arising from closely interacting droplets and boundaries \cite{OjaTor15}. 

Alternative particle-based methods, such as smoothed particle hydrodynamics \cite{Zhang2025} and the moving particle semi-implicit method \cite{Duan2017,Wu2023}, eliminate the need for mesh generation and remeshing. However, accurately resolving near-contact interactions still requires sufficiently fine particle resolutions. Moreover, these methods typically achieve lower accuracy than high-order finite-element or boundary-integral methods and often require careful tuning of numerical parameters (such as kernel functions and stabilization terms) to ensure numerical stability.

An additional challenge arises as the interfaces evolve in time, since the flow problem must be approximated and solved on a sequence of changing geometries.  In scenarios with many droplets, the fine spatial resolution required to capture near-contact interactions usually leads to expensive simulations. In this work, we present a surrogate model that enables efficient prediction of droplet evolution, even in densely packed, time-varying configurations.

\subsection{Target problem}

We consider an incompressible multiphase Stokes system in a bounded Lipschitz domain $\Omega\subset\bR^d$, $d\in\{2,3\}$. The system consists of $M\geq1$ droplets of immiscible fluid in a surrounding carrier fluid. The droplets occupy the region\footnote{We use the subscript ``$B$'' (standing for ``bubbles'') rather than ``$D$'' (for ``droplets'') to avoid misunderstandings with Dirichlet boundary notation.} $\overline{\Omega_B}\subset\Omega$, which consists of $M$ connected components (the ``droplets''). Each droplet is bounded by a smooth interface, and the surrounding fluid occupies $\Omega_F=\Omega\setminus\overline{\Omega_B}$. The union of all droplet interfaces is denoted by $\Gamma=\partial\Omega_B$. To represent the different fluid properties, we introduce a piecewise constant viscosity field $\mu:\Omega\to\bR_{>0}$, with jumps across the droplet interfaces $\Gamma$.

Dirichlet and Neumann boundary conditions are prescribed on $\Gamma_D\subset\partial\Omega$ and $\Gamma_N=\partial\Omega\setminus\Gamma_D$, respectively, although other boundary conditions may also be considered. A schematic illustration of the computational domain and its boundaries is shown in \cref{fig:domain}. Across the interfaces $\Gamma$, the velocity is continuous, ensuring conservation of mass, while the jump in the normal stress balances the surface tension force.

\begin{figure}
    \centering
    \vspace{-3mm}
    \includegraphics{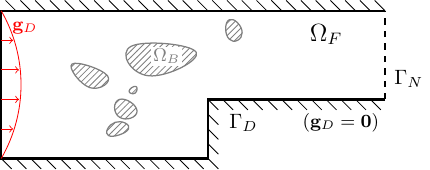}
    \vspace{-2mm}
    \caption{Example: domain of the Stokes interface problem.}
    \label{fig:domain}
\end{figure}

The interfaces evolve with the fluid velocity. At each time $t \in [0,T]$, the flow satisfies the Stokes equations:
\begin{equation}\label{eq:cutfemtime}
    \begin{cases}
        \dot\vx(t)=\vu(t) & \text{on } \Gamma(t),\\
        -\nabla\cdot\uT(\vu(t),p(t);\mu)=\vforce & \text{in } \Omega\setminus\Gamma(t),\\
        \nabla\cdot\vu(t)=0 & \text{in } \Omega\setminus\Gamma(t),\\
        \jump{\vu(t)}=\vzero & \text{on } \Gamma(t),\\
        \jump{\uT(\vu(t),p(t);\mu)\vn}=\gamma\kappa\vn & \text{on } \Gamma(t),\\
        \vu(t)=\vg & \text{on } \Gamma_D,\\
        \uT(\vu(t),p(t);\mu)\vn=\vf & \text{on } \Gamma_N,
    \end{cases}
\end{equation}
with $\vx(t)$ denoting the position of the interface, whereas $\vu$ and $p$ denote the velocity and pressure fields, respectively. Furthermore, $\uT(\vu,p;\mu)=2\mu\uE(\vu)-p\uI$ is the viscous stress tensor, $\gamma$ is the (piecewise constant) surface tension coefficient, $\kappa:\Gamma\to\bR$ is the mean curvature of the interface $\Gamma$ and $\vn$ is the unit normal to the interface (pointing towards $\Omega_F$) and on the outer boundary (pointing outwards). We use double square brackets to denote jumps across $\Gamma$, i.e., $\jump{\vu}:=\vu|_{\Omega_F}-\vu|_{\Omega_B}$. 

The volume forcing term is $\vforce\in[H^{-1}(\Omega)]^d$, whereas the Dirichlet and Neumann boundary data are given by $\vg\in[H^{1/2}(\Gamma_D)]^d$ and $\vf\in[H^{-1/2}(\Gamma_N)]^d$, respectively\footnote{If $\Gamma_N=\emptyset$, well-posedness requires supplementing the problem with a zero-mean condition on the pressure, $\int_\Omega p=0$. This condition can be imposed in post-processing once a solution has been obtained  and does not affect the present discussion.}. For simplicity, we assume that the boundary data are time-independent, so that the only time dependence in \cref{eq:cutfemtime} arises through the  evolution of the interface $\Gamma(t)$. This assumption can be relaxed, as the main challenge lies in the treatment of the evolving interface $\Gamma(t)$. Assuming that $\Gamma(t)$ is sufficiently smooth, there exists a weak solution $(\vu(t),p(t))\in [H^1(\Omega)]^d\times L^2(\Omega)$ for each fixed $t\in[0,T]$. 

The linearity (in $\vu$ and $p$) of the multiphase Stokes problem \cref{eq:cutfemtime} makes it particularly attractive for the development of compact surrogate models. While the more general multiphase Navier-Stokes equations are required when fluid inertia cannot be neglected, their nonlinear nature introduces substantial additional complexity. At the same time, the Stokes setting accurately describes a broad class of low-Reynolds-number multiphase flows and is already sufficiently rich to capture the geometric and interfacial challenges that motivate this work.

\subsection{Surrogate modeling}
There is growing interest in surrogate models that provide accurate and efficient predictions of multiphase flows, particularly in regimes where fully resolved simulations are computationally challenging. Machine learning (ML) algorithms are increasingly employed to construct such surrogates, with the aim of replacing expensive high-fidelity simulations with computationally efficient predictions. ML-based approaches have been used, for example, to predict droplet breakup \cite{Cundy24}, flow fields around obstacles such as spinning spheres \cite{Liu24}, pairwise hydrodynamic interactions in Stokes flows \cite{Elmestikawy24}, and the spatiotemporal evolution of microstructures \cite{Yang21}. Most closely related to the problem considered here is the machine-learning-augmented reduced model proposed in \cite{Biros19}, which also studies systems of many particles interacting through Stokes flow.

For the stationary Stokes problem with rigid particles, we also mention the recent work \cite{BroBarTor25}, which combines the method of fundamental solutions with the method of images to obtain a computationally cheaper alternative to standard boundary integral formulations and demonstrates simulations with up to 2000 fixed spheres.

In regimes with many droplets and small interfacial separations, existing high-fidelity solvers require extreme mesh refinement near narrow gaps, while existing ML surrogates often lack the physical transparency needed to guarantee reliability and accuracy, particularly outside the regimes represented by the training data. To address these challenges, this work introduces a novel hierarchical, physics-informed surrogate modeling framework for the multiphase Stokes problem \cref{eq:cutfemtime}. The core ideas and contributions of our approach are:
\begin{itemize}
    \item \emph{Physics-based error correction.} Instead of treating the surrogate as a black box, we propose a multi-fidelity iterative strategy. Beginning with a baseline Stokes solution that neglects interfaces, we systematically derive and solve a sequence of PDE-based error equations to incrementally recover the high-fidelity solution.
    \item \emph{Replacing refinement with iteration.} To avoid the high complexity of solving global error equations in tightly packed domains, we introduce an incremental approximation strategy. Corrections are constructed locally through a superposition of the field contributions of individual droplets, reducing both memory requirements and computational cost.
    \item \emph{Accuracy at low cost.} Numerical experiments demonstrate that the proposed framework achieves errors below $\sim 0.1\%$, compared to fully resolved high-fidelity simulations, at substantially lower computational cost.
\end{itemize}

\section{Preliminaries}\label{sec:preliminaries}
Since the only time-dependence in \cref{eq:cutfemtime} is due to the evolving interface $\Gamma(t)$, we first consider the stationary Stokes problem by itself, with $\Gamma=\Gamma(\overline t)$ for some $\overline t$:
\begin{equation}\label{eq:cutfem}
    \begin{cases}
        -\nabla\cdot\uT(\vu,p;\mu)=\vforce & \text{in } \Omega\setminus\Gamma=\Omega_B \cup \Omega_F,\\
        \nabla\cdot\vu=0 & \text{in } \Omega_B \cup \Omega_F,\\
        \jump{\vu}=\vzero & \text{on } \Gamma,\\
        \jump{\uT(\vu,p;\mu)\vn}=\gamma\kappa\vn & \text{on } \Gamma,\\
        \vu=\vg & \text{on } \Gamma_D,\\
        \uT(\vu,p;\mu)\vn=\vf & \text{on } \Gamma_N.
    \end{cases}
\end{equation}
This problem naturally arises whenever discretizing \cref{eq:cutfemtime} with respect to time. As such, we set the solution $(\vu,p)$ of \cref{eq:cutfem} as our preliminary target for surrogate modeling. We get back to the time-dependent problem in \cref{sec:time}.

We further assume that the surrounding fluid $\Omega_F$ has homogeneous viscosity $\mu_F\in\mathbb R_{>0}$, i.e., $\mu|_{\Omega_F}\equiv\mu_F$. Moreover, $\mu$ is assumed to be constant on each droplet, although different droplets may have different viscosities.

\subsection{Lowest fidelity}\label{sec:preliminaries:low}
As lowest fidelity, we consider a Stokes problem in the absence of droplets. Specifically, we consider the case where the entire domain $\Omega$ is filled with a single fluid of homogeneous ``nominal'' viscosity $\mu_F$. This corresponds to modeling the droplets as \emph{passive tracers}, neglecting both viscosity contrast and surface tension. The governing equations then read
\begin{equation}\label{eq:fem}
    \begin{cases}
        -\nabla\cdot\uT(\vu_0,p_0;\mu_F)=\vforce & \text{in } \Omega,\\
        \nabla\cdot\vu_0=0 & \text{in } \Omega,\\
        \vu_0=\vg & \text{on } \Gamma_D,\\
        \uT(\vu_0,p_0;\mu_F)\vn=\vf & \text{on } \Gamma_N.
    \end{cases}
\end{equation}
Note, in particular, that the boundary conditions are consistent with those of \cref{eq:cutfem}, given our assumption on $\mu$. Standard arguments \cite{GirRav86} show that this problem is well-posed, with solution $(\vu_0,p_0)\in[H^1(\Omega)]^d\times L^2(\Omega)$. In addition, this solution satisfies the following interface conditions across the ``immersed interfaces'' $\Gamma\subset\Omega$:
\begin{equation}\label{eq:fematgamma}
    \jump{\vu_0}=\vzero=\jump{\uT(\vu_0,p_0;\mu_F)\vn} \text{ on } \Gamma.
\end{equation}
The first identity follows from the fact that $\vu_0\in [H^1(\Omega)]^d$, which admits a unique trace on $\Gamma$. Moreover, since $(\vu_0,p_0)\in [H^1(\Omega)]^d \times L^2(\Omega)$ satisfies $-\nabla\cdot \uT(\vu_0,p_0;\mu_F)=\vzero$ in the sense of distributions, an application of the divergence theorem on the subdomains separated by $\Gamma$ shows that no singular interface term arises, and hence $\jump{\uT(\vu_0,p_0;\mu_F)\vn}=\vzero$; see, e.g.,\cite{GirRav86} or standard derivations of interface jump conditions in Stokes/elasticity problems.

We can characterize the lowest-fidelity error $(\vu-\vu_0,p-p_0)$ as follows.

\begin{proposition}\label{prop:correctbase}
    Let $(\vu,p)$ and $(\vu_0,p_0)$ solve \cref{eq:cutfem} and \cref{eq:fem}, respectively. The error fields $\vv^{(0)}:=\vu-\vu_0$ and $q^{(0)}:=p-p_0$ satisfy
    \begin{equation}\label{eq:butterfly}
        \begin{cases}
            -\nabla\cdot\uT(\vv^{(0)},q^{(0)};\mu)=2(\mu-\mu_F)\nabla\cdot\uE(\vu_0)&\textup{in }\Omega_B\cup\Omega_F,\\
            \nabla\cdot\vv^{(0)}=0&\textup{in }\Omega_B\cup\Omega_F,\\
            \jump{\vv^{(0)}}=\vzero&\textup{on }\Gamma,\\
            \jump{\uT(\vv^{(0)},q^{(0)};\mu)\vn}=\left(\gamma\kappa\uI-2\jump{\mu}\uE(\vu_0)|_{\Omega_B}\right)\vn&\textup{on }\Gamma,\\
            \vv^{(0)}=\vzero&\textup{on }\Gamma_D,\\
            \uT(\vv^{(0)},q^{(0)};\mu)\vn=\vzero&\textup{on }\Gamma_N.
        \end{cases}
    \end{equation}
\end{proposition}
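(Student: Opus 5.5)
The plan is to subtract \cref{eq:fem} from \cref{eq:cutfem} equation by equation, using linearity of the stress in $(\vu,p)$ for a fixed viscosity. The basic identity is
\begin{equation*}
\uT(\vu_0,p_0;\mu)=\uT(\vu_0,p_0;\mu_F)+2(\mu-\mu_F)\uE(\vu_0),
\end{equation*}
which holds pointwise in $\Omega_B\cup\Omega_F$. From it,
\begin{equation*}
\uT(\vv^{(0)},q^{(0)};\mu)=\uT(\vu,p;\mu)-\uT(\vu_0,p_0;\mu_F)-2(\mu-\mu_F)\uE(\vu_0).
\end{equation*}
Everything else is bookkeeping with this decomposition. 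The one subtle point is how $\mu-\mu_F$ behaves on $\Gamma$.

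\textbf{Bulk equations.} Take the divergence of the identity on each subdomain and use that $\mu$ is constant on each connected component. Both $-\nabla\cdot\uT(\vu,p;\mu)$ and $-\nabla\cdot\uT(\vu_0,p_0;\mu_F)$ equal $\vforce$ there, so they cancel, leaving
\begin{equation*}
-\nabla\cdot\uT(\vv^{(0)},q^{(0)};\mu)=2(\mu-\mu_F)\nabla\cdot\uE(\vu_0).
\end{equation*}
This source vanishes on $\Omega_F$. The divergence-free condition is immediate by subtraction.

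\textbf{Boundary conditions.} On $\Gamma_D$ both velocities equal $\vg$, so $\vv^{(0)}=\vzero$. On $\Gamma_N$ we need $\mu=\mu_F$ near $\partial\Omega$, which holds because $\overline{\Omega_B}\subset\Omega$. Then the correction term $2(\mu-\mu_F)\uE(\vu_0)\vn$ vanishes, and the two Neumann data $\vf$ cancel.

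\textbf{Interface conditions.} This is the step needing care. Continuity of velocity follows from $\jump{\vu}=\vzero$ and \cref{eq:fematgamma}. For the traction jump, take jumps in the displayed expression for $\uT(\vv^{(0)},q^{(0)};\mu)$. The term $\jump{\uT(\vu,p;\mu)\vn}$ contributes $\gamma\kappa\vn$ by \cref{eq:cutfem}, and $\jump{\uT(\vu_0,p_0;\mu_F)\vn}$ vanishes by \cref{eq:fematgamma}. The remaining term is
\begin{equation*}
-\jump{2(\mu-\mu_F)\uE(\vu_0)\vn}.
\end{equation*}
On the $\Omega_F$ side $\mu-\mu_F=0$, so the jump reduces to minus the $\Omega_B$-side value, giving $+2(\mu|_{\Omega_B}-\mu_F)\uE(\vu_0)|_{\Omega_B}\vn$. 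Since $\jump{\mu}=\mu_F-\mu|_{\Omega_B}$, this equals $-2\jump{\mu}\uE(\vu_0)|_{\Omega_B}\vn$, which is the claimed interface condition.

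\textbf{Main obstacle: regularity.} We only know $\vu_0\in[H^1(\Omega)]^d$, so the traces of $\uE(\vu_0)$ on $\Gamma$ and the bulk term $\nabla\cdot\uE(\vu_0)$ are not classically defined. I would handle this in one of two ways. The first is to argue formally, assuming enough interior regularity of $\vu_0$ near $\Gamma$; interior elliptic regularity for Stokes gives $\vu_0\in H^2$ locally when $\vforce$ is smooth near $\Omega_B$. The second is to interpret all the identities weakly, in the same sense as \cref{eq:fematgamma}, checking against test functions via the divergence theorem on each subdomain. In that weak form, the bulk source and the interface term combine into a single functional of the form $v\mapsto\int_{\Omega_B}2(\mu-\mu_F)\uE(\vu_0):\uE(v)$, which makes sense for $\vu_0\in H^1$.
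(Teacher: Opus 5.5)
Your proof is correct and follows essentially the paper's route: subtract \cref{eq:fem} and \cref{eq:fematgamma} from \cref{eq:cutfem} equation by equation, using the splitting $\uT(\vu_0,p_0;\mu)=\uT(\vu_0,p_0;\mu_F)+2(\mu-\mu_F)\uE(\vu_0)$, which is the same decomposition the paper applies in its appendix proof of the general case. Your regularity remark, which reads the bulk source and interface term together as the functional $\int_{\Omega_B}2(\mu-\mu_F)\uE(\vu_0):\uE(\cdot)$, is a sound addition that the paper leaves implicit.
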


\begin{proof}
    The result follows by subtracting \cref{eq:fem} and \cref{eq:fematgamma} from \cref{eq:cutfem} equation by equation, using the bilinearity of $\uT$. This is also a special case of \cref{prop:correctfull} below.
\end{proof}

\begin{figure}[tb!]
    \centering
    \includegraphics{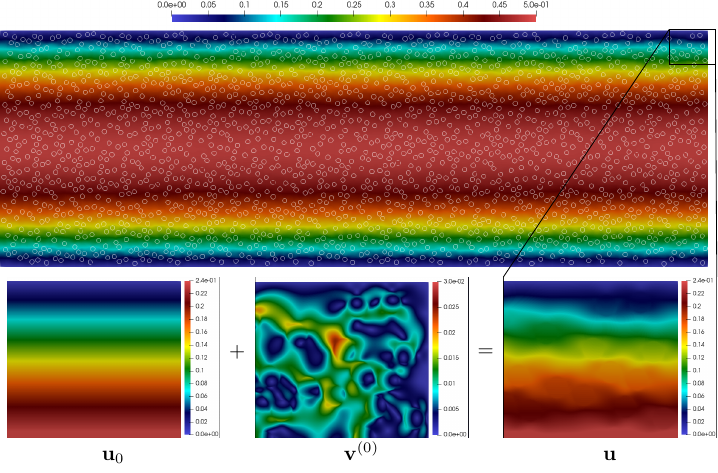}\vspace{-1mm}
    \caption{Example of Stokes flow in a channel with $M=2000$ circular droplets, represented with white outlines (full details are provided in \cref{sec:num:scaling}). Top: magnitude of velocity field. Bottom row: the solution can be decomposed into lowest-fidelity surrogate (Poiseuille flow) and error field, cf.~\cref{prop:correctbase}.}
    \label{fig:scaling_qual}
\end{figure}

The decomposition of the exact field into a lowest-fidelity field and an error field is illustrated in \cref{fig:scaling_qual}. Identifying an equation governing the lowest-fidelity error is crucial to our approach. Indeed, to improve the approximation quality, a second fidelity level is introduced on top of $(\vu_0,p_0)$, based on \cref{eq:butterfly}. More specifically, a two-fidelity approximation of $(\vu,p)$ will be constructed as $(\vu_0+\vv_0,p_0+q_0)$, where $(\vv_0,q_0)$ approximates the solution $(\vv^{(0)},q^{(0)})$ of \cref{eq:butterfly}.

Note that we seek a surrogate $(\vv_0,q_0)$ rather than directly computing $(\vv^{(0)},q^{(0)})$, since solving \cref{eq:butterfly} incurs essentially the same computational cost as solving the original problem \cref{eq:cutfem}. In particular, both problems are affected by the same computational challenges associated with resolving the droplet interfaces.

\subsection{Further fidelities}\label{sec:singledroplet}
As outlined above, in order to recover computational tractability of the error equation \cref{eq:butterfly}, we consider a second auxiliary problem. Only the volume and interface effects of \emph{a single droplet} are kept, while all other droplets, as well as the domain boundaries, are removed: given a connected $\Omega_B^i\subset\Omega_B$ (a single droplet), we look at
\begin{equation}\label{eq:singledroplet}
    \begin{cases}
        -\nabla\cdot\uT(\vv,q;\mu)=2(\mu-\mu_F)\nabla\cdot\uE(\vphi) & \text{in } \Omega_B^i,\\
        -\nabla\cdot\uT(\vv,q;\mu_F)=\vzero & \text{in } \bR^d\setminus\overline{\Omega_B^i},\\
        \nabla\cdot\vv=0 & \text{in } \bR^d\setminus\partial\Omega_B^i,\\
        \jump{\vv}=\vzero & \text{on } \partial\Omega_B^i,\\
        \jump{\uT(\vv,q;\mu)\vn}=\left(\widetilde\gamma\kappa\uI-2\jump{\mu}\uE(\vphi)\right)\vn & \text{on } \partial\Omega_B^i,\\
        \lim_{\norm{\vx}\to\infty}\vv(\vx)=\vzero.&
    \end{cases}
\end{equation}
The surface tension coefficient $\widetilde\gamma$ and the generic flow field $\vphi\in[H^1(\Omega_B^i)]^d$ are meant as placeholders, although initially equal to $\gamma$ and $\vu_0$, respectively, cf.~\cref{prop:correctbase}.

This formulation resembles the full consistency equations in \cref{prop:correctbase}, after replacing the correct viscosity field $\mu$ with
\begin{equation}\label{eq:mupartial}
    \mu_F+(\mu|_{\Omega_B^i}-\mu_F)\chi_{\Omega_B^i},
\end{equation}
that is, assuming that only the $i$-th droplet is present. Here and elsewhere, $\chi_A$ denotes the indicator function of $A$, equal to $1$ on $A$ and $0$ elsewhere.

Note that \cref{eq:singledroplet} is cast in free $\bR^d$-space and a zero velocity far-field is imposed for closure. This achieves four beneficial effects:
\begin{itemize}
    \item When, in the original problem \cref{eq:cutfem}, the gap between a droplet $\Omega_B^i$ and the domain boundary $\partial\Omega$ is small, one is required to employ highly refined discretizations. See \cref{rem:traditional} for more details. This issue is completely absent in \cref{eq:singledroplet} due to the boundary removal.
    \item If two droplets $\Omega_B^i$ and $\Omega_B^{i'}$ are (geometrically) similar to each other, the corresponding problems \cref{eq:singledroplet} can be mapped one to the other through simple affine mappings $\Omega_B^i\mapsto\Omega_B^{i'}$ and changes in $\widetilde\gamma$ and $\vphi$. As discussed in \cref{sec:basis}, this can be exploited for efficiency gains.
    \item Non-homogeneous forcing terms are present only at $\overline{\Omega_B^i}$ and decay conditions are imposed at infinity, so that the magnitude of the solution $(\vv,q)$ can be expected to decrease as the distance from the droplet increases. See also \cref{sec:termination} below. In a fundamental way, this achieves \emph{localization} in each droplet's effect on the flow, which is crucial for the convergence of our approach.
    \item Problem \cref{eq:singledroplet} naturally lends itself to a boundary-integral-based discretization, where the infinite domain size is actually beneficial rather than detrimental \cite{tornberg_fast_2008}. In particular, approximating the solution to this problem through boundary-integral equations is much simpler than if boundary conditions at $\partial\Omega$ had been imposed.
\end{itemize}

\section{Surrogate modeling}\label{sec:surrogate}
In what follows, we restrict our attention to configurations where all $M$ droplets are \emph{circular}:
\begin{equation*}
    \Omega_B=\bigcup_{i=1}^M\underbrace{B(\vx_i;r_i)}_{\Omega_B^i},
\end{equation*}
where centers $\vx_i$ and radii $r_i$ are suitably chosen so that droplets $\overline{\Omega_B^i}$ are pairwise disjoint and $\overline{\Omega_B}\subset\Omega$, in line with our working assumptions. This condition greatly simplifies both the analytical derivations and the construction of surrogate models, while still capturing the essential features of the problem. In \cref{sec:extensions} we discuss possible extensions to non-circular droplets, which might be of particular relevance when simulating time-dependent flows, where interfaces naturally deform.

If all droplets are circular, then the effects of surface tension may be accounted for in an exact way through pressure jumps. Specifically, recalling that surface tension is uniform on each droplet's boundary but possibly different across droplets, consider
\begin{equation*}
    \widetilde p=\sum_{i=1}^M\frac{\gamma_i}{r_i}\chi_{\Omega_B^i},
\end{equation*}
where $\gamma_i:=\gamma|_{\partial \Omega_B^i}\in\bR$. Then $(\vu,p-\widetilde p)$ satisfies
\begin{equation*}
    \begin{cases}
        -\nabla\cdot\uT(\vu,p-\widetilde p;\mu)=\vforce & \text{in } \Omega_B \cup \Omega_F,\\
        \nabla\cdot\vu=0 & \text{in } \Omega_B \cup \Omega_F,\\
        \jump{\vu}=\vzero & \text{on } \Gamma,\\
        \jump{\uT(\vu,p-\widetilde p;\mu)\vn}=\vzero & \text{on } \Gamma,\\
        \vu=\vg & \text{on } \Gamma_D,\\
        \uT(\vu,p-\widetilde p;\mu)\vn=\vf & \text{on } \Gamma_N,
    \end{cases}
\end{equation*}
cf.~\cref{eq:cutfem}. This is easily seen by linearity of the equation and by inspecting the curvature $\kappa\equiv r_i^{-1}$ on $\partial \Omega_B^i$.

For this reason, in describing our surrogate-modeling strategy, we may assume that no surface tension is present without loss of generality: $\gamma=0$. Indeed, any nonzero amount of surface tension may be accounted for by just adding $\widetilde p$ to the surrogate in a post-processing step.

\subsection{Initial correction}\label{sec:full:first}

As outlined in \cref{sec:preliminaries}, we take the solution $(\vu_0,p_0)$ of the ``no-droplet'' Stokes problem \cref{eq:fem} with the prescribed boundary data $(\vg,\vf)$ as a first low-fidelity surrogate. This solution satisfies all boundary conditions on $\partial\Omega$ but fails to satisfy the stress balance equation within the droplets $\Omega_B$ and the interfacial force balance at $\Gamma=\partial\Omega_B$.

This situation is captured in \cref{prop:correctbase}, which shows that, to steer $(\vu_0,p_0)$ toward the true solution $(\vu,p)$ of \cref{eq:cutfem}, one would in principle need to add a globally supported correction $(\vv^{(0)},q^{(0)})$ solving \cref{eq:butterfly}. However, computing such correction by solving \cref{eq:butterfly} is computationally unfeasible, since we would require dealing with all droplets' effects simultaneously. Instead, we construct a computationally cheaper surrogate correction $(\vv_0,q_0)\approx(\vv^{(0)},q^{(0)})$ via localized corrections.

First, note that the correction equation \cref{eq:butterfly} has non-homogeneous forcing terms supported over $\overline{\Omega_B}$, which, if $M$ non-overlapping droplets are present, consists of $M$ disjoint connected components. The trick to recover tractability is to replace the single problem \cref{eq:butterfly} with forcing terms at $M$ droplets with $M$ independent problems, each having forcing terms at a single droplet. More concretely, we consider a collection of $M$ problems resembling \cref{eq:singledroplet}, each targeting a different droplet $\Omega_B^i$, namely,
\begin{equation}\label{eq:singledropfull}
    \begin{cases}
        -\nabla\cdot\uT(\vv_{0,i},q_{0,i};\mu)=2(\mu-\mu_F)\nabla\cdot\uE(\vu_0)&\text{in }\Omega_B^i,\\
        -\nabla\cdot\uT(\vv_{0,i},q_{0,i};\mu_F)=\vzero&\text{in }\Omega_B\cup\Omega_F\setminus \Omega_B^i,\\
        \nabla\cdot\vv_{0,i}=0&\text{in }\Omega_B\cup\Omega_F,\\
        \jump{\vv_{0,i}}=\vzero&\text{on }\Gamma,\\
        \jump{\uT(\vv_{0,i},q_{0,i};\mu)\vn}=-2\jump{\mu}\uE(\vu_0)|_{\Omega_B^i}\vn&\text{on }\partial \Omega_B^i,\\
        \jump{\uT(\vv_{0,i},q_{0,i};\mu_F)\vn}=\vzero&\text{on }\Gamma\setminus\partial \Omega_B^i,
    \end{cases}
\end{equation}
for $i=1,\ldots,M$. Note that, as outlined above, we neglect surface tension ($\gamma=0$) since that can be accounted for exactly in the case of circular droplets.

In the above $M$ problems, we specify no boundary conditions on $\partial\Omega$. Although \cref{prop:correctbase} assumes homogeneous data there, we instead solve each problem through \cref{eq:singledroplet} (setting $\widetilde\gamma=0$ and $\vphi=\vu_0$), where the homogeneous Stokes equation is extended to $\bR^d\setminus \Omega_B^i$ and decay conditions as $\norm{\vx}\to\infty$ are enforced for closure. The benefits of this have been discussed in \cref{sec:singledroplet}.

\begin{remark}
    This decomposition into $M$ problems with a simply supported forcing term bears some similarity to a partition-of-unity strategy, in that global forcing terms are decomposed into $M$ local contributions at each droplet.
\end{remark}

The first correction is then defined by summing the $M$ local contributions,
\begin{equation*}
    \vv_0=\sum_{i=1}^M\vv_{0,i},
    \qquad
    q_0=\sum_{i=1}^Mq_{0,i},
\end{equation*}
yielding the updated intermediate approximation
\[
    \vu_{1/2}=\vu_0+\vv_0,
    \qquad
    p_{1/2}=p_0+\sum_{i=1}^M\frac{\gamma_i}{r_i}\chi_{\Omega_B^i}+q_0.
\]
(Note that the surface-tension-related pressure term discussed above has also been included.)

Several further observations are crucial at this point:
\begin{itemize}
    \item Each $(\vv_{0,i},q_{0,i})$ is computed based on the modified viscosity field \cref{eq:mupartial}, which is inconsistent with the true $\mu$ at all droplets except the $i$-th one. Consequently, if $M>1$, $(\vu_{1/2},p_{1/2})$ violates the Stokes equations at all droplet locations. This is made rigorous in \cref{prop:correctfull} below. Nevertheless, as we discuss in \cref{sec:termination}, we expect the intermediate approximation $(\vu_{1/2},p_{1/2})$ to be more accurate than the lowest-level surrogate $(\vu_0,p_0)$ at $\overline{\Omega_B}$, despite the partial lack of consistency.
    \item The homogeneous boundary conditions at $\partial\Omega$ are neglected in favor of the more computationally efficient decay conditions as $\norm{\vx}\to\infty$. As such, $(\vv_0,q_0)$ has support that includes the whole $\overline{\Omega}$, so that the new approximation $(\vu_{1/2},p_{1/2})$ violates the boundary conditions on $\partial\Omega$.
\end{itemize}

These observations behoove us to add further corrections to improve consistency and accuracy. In order to streamline our approach, we choose to follow up the newly added ``droplet correction'' $(\vv_0,q_0)$ with a ``boundary correction'' $(\vv_{1/2},q_{1/2})$, whose target is recovering consistency in the boundary conditions at $\partial\Omega$.

To derive a boundary-correction equation, we first identify the error in Dirichlet and Neumann boundary conditions:
\begin{equation*}
    \begin{cases}
        (\vu-\vu_{1/2})|_{\Gamma_D}=\vg-(\vu_0+\vv_0)|_{\Gamma_D}=-\vv_0|_{\Gamma_D}=:\vgcorr{1/2},\\
        (\uT(\vu,p;\mu)\vn-\uT(\vu_{1/2},p_{1/2};\mu)\vn)|_{\Gamma_N}=\ldots=-(\uT(\vv_0,q_0;\mu)\vn)|_{\Gamma_N}=:\vfcorr{1/2}.
    \end{cases}    
\end{equation*}
Adding any correction field $(\vv_{1/2},q_{1/2})$ satisfying the above boundary conditions would yield a surrogate that is consistent at the domain boundaries. In order to reduce the ``interior inconsistencies'' that such correction adds, we compute $(\vv_{1/2},q_{1/2})$ by solving the no-droplet Stokes problem \cref{eq:fem} with uniform viscosity $\mu_F$, replacing the original boundary conditions $\vg$ and $\vf$ with $\vgcorr{1/2}$ and $\vfcorr{1/2}$, respectively. We then obtain the updated approximation
\[
    \vu_1 := \vu_{1/2} + \vv_{1/2}, 
    \qquad
    p_1 := p_{1/2} + q_{1/2},
\]
which, by construction and linearity, satisfies the correct boundary conditions in the original problem \cref{eq:cutfem}, as well as the Stokes equation in the fluid domain $\Omega_F$ and continuity of the flow field across $\Gamma$.

\subsection{Further corrections and iteration}\label{sec:full:full}

Similarly to the lowest-fidelity surrogate $(\vu_0,p_0)$, approximation $(\vu_1,p_1)$ attains the correct boundary conditions but does not satisfy the stress balance equation within the droplets $\Omega_B$ nor the interfacial force balance at $\Gamma=\partial\Omega_B$. At the same time, we expect the updated surrogate to be more accurate, e.g., in the sense that its residual with respect to the original PDE \cref{eq:cutfem} is smaller.

To further improve accuracy, we iterate the above steps, leading to the alternating correction scheme summarized in \cref{algo:full}. Termination criteria are discussed in \cref{sec:termination}.

\begin{algorithm}[ht]
	\caption{Approximate solution of Stokes problem with droplets at $\Omega_B=\bigcup_{i=1}^MB(\vx_i;r_i)$}
	\begin{algorithmic}[1]\label{algo:full}
		\REQUIRE{viscosities $\mu_i=\mu(B(\vx_i;r_i))$ and surface tensions $\gamma_i=\gamma(\partial B(\vx_i;r_i))$}
		\REQUIRE{BCs $\vg$ (Dirichlet) and $\vf$ (Neumann)}
		\REQUIRE{Solver $\text{S\_boundary}(\widetilde \vforce,\widetilde \vg,\widetilde\vf)$ for \cref{eq:fem} with arbitrary forcing $\widetilde\vforce$ and BCs $\widetilde\vg$ and $\widetilde\vf$}
		\REQUIRE{Solver $\text{S\_droplet}(\mu_i,\widetilde\gamma,\vphi,\Omega_B^i)$ for \cref{eq:singledroplet} with arbitrary $\widetilde\gamma\in\bR$ and flow field $\vphi:\overline{\Omega_B^i}\to\bR^d$}
		\STATE{Compute baseline $(\vu_0,p_0)\gets\text{S\_boundary}(\vforce,\vg,\vf)$}
		\STATE{Update $p_0\gets p_0+\sum_{i=1}^M\gamma_i\chi_{B(\vx_i;r_i)}/r_i$ to account for surface tension}
		\STATE{Initialize $\vv_{-1,i}\gets\vzero$ for all $i=1,\ldots,M$, $\vv_{-1}\gets\vzero$, and $\vv_{-1/2}\gets\vu_0$}
		\FOR{$j=0,1,\ldots,n_{\text{iter}}-1$}
    		\FOR{$i=1,\ldots,M$}
        		\STATE{Get interior consistency error field $\vphi_{j-1/2,i}\gets\vv_{j-1}+\vv_{j-1/2}-\vv_{j-1,i}$}\label{item:correction}
        		\STATE{Compute half-correction component $(\vv_{j,i}, q_{j,i})\gets\text{S\_droplet}(\mu_i,0,\vphi_{j-1/2,i},B(\vx_i;r_i))$}
    		\ENDFOR
    		\STATE{Compute interior half-correction $\vv_j\gets\sum_{i=1}^M\vv_{j,i}$ and $q_j\gets\sum_{i=1}^Mq_{j,i}$}
    		\STATE{Update $\vu_{j+1/2}\gets\vu_j+\vv_j$ and $p_{j+1/2}\gets p_j+q_j$}
    		\STATE{Get boundary consistency errors $\vgcorr{j+1/2}\gets-\vv_j|_{\Gamma_D}$ and $\vfcorr{j+1/2}\gets-(\uT(\vv_j,q_j;\mu)\vn)|_{\Gamma_N}$}
    		\STATE{Compute boundary half-correction $(\vv_{j+1/2},q_{j+1/2})\gets\text{S\_boundary}(\vzero,\vgcorr{j+1/2},\vfcorr{j+1/2})$}
    		\STATE{Update $\vu_{j+1}\gets\vu_{j+1/2}+\vv_{j+1/2}$ and $p_{j+1}\gets p_{j+1/2}+q_{j+1/2}$}
		\ENDFOR
        \RETURN{$(\vu_{n_{\text{iter}}},p_{n_{\text{iter}}})$}
	\end{algorithmic}
\end{algorithm}

Fundamentally, each iteration proceeds in the same way: $M$ solves of \cref{eq:singledroplet} (one per droplet) are followed by a single global solve of \cref{eq:fem} to restore boundary consistency. In particular, at the $j$-th iteration of \cref{algo:full}, we set up $M$ different one-droplet error equations \cref{eq:singledroplet} with certain forcing terms $\vphi=\vphi_{j-1/2,i}$, whose solutions are $(v_{j,i},q_{j,i})$, $i=1,\ldots,M$. As the following result shows, the forcing terms $\vphi_{j-1/2,i}$ are selected so that the partial correction $(\vu_j+\vv_{j,i},p_j+q_{j,i})$ is consistent with the original equation over the $i$-th droplet (although it is inconsistent at other droplets), generalizing \cref{prop:correctbase}.

\begin{proposition}\label{prop:correctfull}
    For any $j\ge0$, let $(\vu_j,p_j)$ denote the approximation produced by \cref{algo:full}. Define the errors $\vv^{(j)}=\vu-\vu_j$ and $q^{(j)}=p-p_j$. Then
    \begin{equation*}
        \begin{cases}
            -\nabla\cdot\uT(\vv^{(j)},q^{(j)};\mu)=2(\mu-\mu_F)\nabla\cdot\uE(\vphi_{j-1/2})&\text{in }\Omega_B\cup\Omega_F,\\
            \nabla\cdot\vv^{(j)}=0&\text{in }\Omega_B\cup\Omega_F,\\
            \jump{\vv^{(j)}}=\vzero&\text{on }\Gamma,\\
            \jump{\uT(\vv^{(j)},q^{(j)};\mu)\vn}=-2\jump{\mu}\uE(\vphi_{j-1/2})|_{\Omega_B}\vn&\text{on }\Gamma,\\
            \vv^{(j)}=\vzero&\text{on }\Gamma_D,\\
            \uT(\vv^{(j)},q^{(j)};\mu)\vn=\vzero&\text{on }\Gamma_N,
        \end{cases}
    \end{equation*}
    where
    \[
        \vphi_{j-1/2}=\sum_{i=1}^M\sum_{i'\neq i}\vv_{j-1,i'}\chi_{\Omega_B^i}+\vv_{j-1/2}.
    \]
\end{proposition}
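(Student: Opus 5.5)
We argue by induction on $j$, writing $\vv^{(j)}=\vv^{(j-1)}-\vv_{j-1}-\vv_{j-1/2}$ and $q^{(j)}=q^{(j-1)}-q_{j-1}-q_{j-1/2}$. This follows from the updates $\vu_j=\vu_{j-1}+\vv_{j-1}+\vv_{j-1/2}$ in \cref{algo:full}, and likewise for the pressure. As discussed before \cref{sec:full:first}, we may take $\gamma=0$ without loss of generality, since the pressure shift $\widetilde p$ added in step 2 accounts exactly for surface tension on circular droplets. For the base case $j=0$, the initialization gives $\vv_{-1,i}=\vzero$ and $\vv_{-1/2}=\vu_0$, so $\vphi_{-1/2}=\vu_0$. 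The claim then reduces to \cref{prop:correctbase}. That proposition itself is obtained by subtracting \cref{eq:fem} and \cref{eq:fematgamma} from \cref{eq:cutfem} term by term, using $\uT(\vu_0,p_0;\mu)=\uT(\vu_0,p_0;\mu_F)+2(\mu-\mu_F)\uE(\vu_0)$.

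For the inductive step, assume the system holds at level $j-1$ with forcing field $\vphi_{j-3/2}$. The first key observation is that on each droplet $\Omega_B^i$ the field passed to S\_droplet coincides with the global field:
\[
\vphi_{j-3/2,i}=\vv_{j-2}+\vv_{j-3/2}-\vv_{j-2,i}=\textstyle\sum_{i'\neq i}\vv_{j-2,i'}+\vv_{j-3/2}=\vphi_{j-3/2}\big|_{\Omega_B^i}.
\]
We then compute the residual of each correction with respect to the true viscosity $\mu$, using the identity $\uT(\vw,r;\mu)=\uT(\vw,r;\mu_F)+2(\mu-\mu_F)\uE(\vw)$.

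\textbf{Droplet corrections.} Each $(\vv_{j-1,i},q_{j-1,i})$ solves \cref{eq:singledroplet} with viscosity \cref{eq:mupartial}.
\begin{itemize}
\item In droplet $\Omega_B^i$, $-\nabla\cdot\uT(\vv_{j-1,i},q_{j-1,i};\mu)=2(\mu-\mu_F)\nabla\cdot\uE(\vphi_{j-3/2})$.
\item In a droplet $\Omega_B^k$ with $k\neq i$, the residual is $-2(\mu-\mu_F)\nabla\cdot\uE(\vv_{j-1,i})$.
\item In $\Omega_F$ the residual is zero.
\end{itemize}
Across $\partial\Omega_B^i$ the stress jump is the prescribed one, $-2\jump{\mu}\uE(\vphi_{j-3/2})|_{\Omega_B^i}\vn$. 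Across $\partial\Omega_B^k$ with $k\ne i$, the field is smooth, so the jump is $2\jump{\mu}\uE(\vv_{j-1,i})|_{\Omega_B^k}\vn$.

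\textbf{Boundary correction.} The pair $(\vv_{j-1/2},q_{j-1/2})$ solves \cref{eq:fem} with uniform viscosity $\mu_F$. Its residual with respect to $\mu$ is $-2(\mu-\mu_F)\nabla\cdot\uE(\vv_{j-1/2})$ in $\Omega_B$. Its interface stress jump is $2\jump{\mu}\uE(\vv_{j-1/2})|_{\Omega_B}\vn$, where continuity of the $\mu_F$-traction follows as in \cref{eq:fematgamma}.

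Summing over $i$ and subtracting from the level-$(j-1)$ system, the $\vphi_{j-3/2}$ terms cancel exactly on each droplet $\Omega_B^k$. What remains is the forcing $2(\mu-\mu_F)\nabla\cdot\uE\big(\sum_{i\neq k}\vv_{j-1,i}+\vv_{j-1/2}\big)$ and the jump $-2\jump{\mu}\uE(\cdot)|_{\Omega_B}\vn$ of the same field. This field is precisely $\vphi_{j-1/2}$ restricted to $\Omega_B^k$. Divergence-freeness and velocity continuity are preserved by linearity. The boundary conditions on $\Gamma_D$ and $\Gamma_N$ are homogeneous because $\vgcorr{j-1/2}$ and $\vfcorr{j-1/2}$ were defined exactly to cancel the boundary traces of $\vv_{j-1}$. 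On $\Gamma_N$ this uses $\mu=\mu_F$ near $\partial\Omega$.

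The main obstacle is sign and trace bookkeeping in the interface jumps at droplets $k\neq i$. There $\vv_{j-1,i}$ is smooth, but the true viscosity jumps, producing the term $2\jump{\mu}\uE(\vv_{j-1,i})|_{\Omega_B^k}\vn$. We must check that this term carries the same sign convention as the $-2\jump{\mu}\uE(\vphi)|_{\Omega_B}\vn$ term in \cref{prop:correctbase}, so that the subtraction yields exactly $-2\jump{\mu}\uE(\vphi_{j-1/2})|_{\Omega_B}\vn$. This is a one-line computation, $\jump{\uT(\vw,r;\mu)\vn}=\jump{\uT(\vw,r;\mu_F)\vn}-2(\mu_k-\mu_F)\uE(\vw)|_{\Omega_B^k}\vn$, which we would carry out explicitly.
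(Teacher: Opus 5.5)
Your proof is correct and follows essentially the paper's route: induction on $j$ by linearity; the splitting $\uT(\mathbf{w},r;\mu)=\uT(\mathbf{w},r;\mu_F)+2(\mu-\mu_F)\uE(\mathbf{w})$, which gives the jump $2\jump{\mu}\uE(\mathbf{w})|_{\Omega_B^k}\vn$ for corrections smooth across $\partial\Omega_B^k$; and the identification $\vphi_{j-1/2,i}=\vphi_{j-1/2}|_{\Omega_B^i}$ from \cref{rem:phi}. The only difference is organizational: the paper carries the auxiliary half-step statement \cref{eq:correctfullextra} for $(\vu_{j+1/2},p_{j+1/2})$, while you update the error system over a full step (droplet and boundary corrections together), so you do not need that auxiliary claim.
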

\begin{proof}
    The claim can be shown by induction on $j$, through careful use of the linearity of problems \cref{eq:cutfem,eq:fem,eq:singledroplet}. See \cref{app:proof} for the full proof.
\end{proof}

\begin{remark}\label{rem:phi}
    Note that, for all $j$ and $i=1,\ldots,M$,
    \[
    \vphi_{j-1/2}|_{\Omega_B^i}=\sum_{i'\neq i}\vv_{j-1,i'}+\vv_{j-1/2}=\vv_{j-1}-\vv_{j-1,i}+\vv_{j-1/2}=\vphi_{j-1/2,i},
    \]
    justifying line \ref{item:correction} in \cref{algo:full}.
\end{remark}

As already outlined in \cref{sec:preliminaries}, the error equation provided in \cref{prop:correctfull} is paramount in our approach, since it gives conditions to be satisfied (in approximate form) by further corrections. We also note the following.
\begin{remark}\label{rem:stagger}
    For simplicity, we alternate ``droplet corrections'' and ``boundary corrections'', with each forming a half-step in our iterative procedure. One might consider making several droplet corrections in a row, thus making the process staggered. (Note that repeated boundary corrections would not be useful because full-step surrogates attain boundary consistency.) This option might be useful as a way to achieve a more balanced procedure when droplet errors dominate boundary errors (in some sense). We leave analysis and testing of this option for future work.
\end{remark}

Concerning the computational efficiency of \cref{algo:full}, it is important to note that the proposed procedure involves repeated solves of \cref{eq:fem} and \cref{eq:singledroplet} with varying right-hand sides. Especially when these problems are discretized with high resolution, efficiency can be greatly improved by preassembling and pre-factorizing (e.g., via sparse LU) the corresponding stiffness matrices. This is particularly beneficial for \cref{eq:fem}. For \cref{eq:singledroplet}, an even cheaper approximate strategy can be adopted, potentially avoiding repeated linear solves altogether at the cost of some accuracy. This alternative is discussed in \cref{sec:basis}.

\subsection{Convergence considerations}\label{sec:termination}
The iterative scheme of \cref{algo:full} interleaves $M$ local \cref{eq:singledroplet} solvers with a global \cref{eq:fem} solver. 
The problems solved at different iterations differ only in their right-hand sides, i.e., the forcing terms on $\overline{\Omega_B^i}$ in \cref{eq:singledroplet} and the boundary conditions on $\partial\Omega$ in \cref{eq:fem}.

The magnitudes of such right-hand sides, namely, the \emph{droplet residual norm}
\begin{equation}\label{eq:fixedfull}
    R_j=\norm{\uE(\vphi_{j-1/2})}_{L^2(\Omega_B)} + \norm{\uE(\vphi_{j-1/2})\vn}_{H^{-1/2}(\Gamma)}
\end{equation}
at the $j$-th ``droplet half-step'' and the \emph{boundary residual norm}
\begin{equation}\label{eq:fixedhalf}
    R_{j+1/2}=\norm{\vv_j}_{H^{1/2}(\Gamma_D)} + \norm{\uT(\vv_j,q_j;\mu)\vn}_{H^{-1/2}(\Gamma_N)}
\end{equation}
at the $j$-th ``boundary half-step'', $j=0,1,\ldots$, are useful to study the convergence of the iterates. To more rigorously justify our interest in \cref{eq:fixedfull,eq:fixedhalf}, we note that, by the well-posedness of \cref{eq:fem,eq:singledroplet} \cite{ladyzhenskaya1963,Pozrikidis1992}, the increment norm $\norm{\vu_{j+1}-\vu_j}$ may be bounded in terms of the residual norms, e.g.,
\begin{multline*}
    \norm{\vu_{j+1}-\vu_j}_{H^1(\Omega\setminus\Gamma)}=\norm{\vv_j+\vv_{j+1/2}}_{H^1(\Omega\setminus\Gamma)}\leq\sum_{i=1}^M\norm{\vv_{j,i}}_{H^1(\Omega\setminus\Gamma)}+\norm{\vv_{j+1/2}}_{H^1(\Omega\setminus\Gamma)}\leq\\
    \leq C_{\text{1-drop}}\sum_{i=1}^M\big|\mu(\vx_i)-\mu_F\big|\left(\norm{\uE(\vphi_{j-1/2})}_{L^2(\Omega_B^i)} + \norm{\uE(\vphi_{j-1/2})\vn}_{H^{-1/2}(\partial\Omega_B^i)}\right)+C_{\text{0-drop}}R_{j+1/2}\leq\\
    \leq\widetilde C_{\text{1-drop}}\max_i\big|\mu(\vx_i)-\mu_F\big|R_j+C_{\text{0-drop}}R_{j+1/2}.
\end{multline*}
A similar derivation holds for the pressure increment $\norm{p_{j+1}-p_j}=\norm{q_j+q_{j+1/2}}$.

Although further work is needed to fully analyze the convergence of the residual norms, we outline here an intuitive reason why we expect the residuals to decrease as $j\to\infty$. At iteration $j$, the correction $(\vv_{j,i},q_{j,i})$ due to the $i$-th droplet affects the boundary residual norm $R_{j+1/2}$ only through boundary evaluations $\vv_{j,i}|_{\Gamma_D}$ and $\uT(\vv_{j,i},q_{j,i};\mu)\vn|_{\Gamma_N}$. As displayed, e.g., in \cref{fig:basis}, we have empirical evidence that local corrections $(\vv_{j,i},q_{j,i})$ decay away from the $i$-th droplet, i.e., that the surrogate correction fields are ``smaller'' on the domain boundary than on the droplets that cause them. As such, \emph{field decay} heuristically suggests that $R_{j+1/2}\lesssim R_j$.

On the other hand, we may heuristically assume that $R_{j+1}\lesssim R_{j+1/2}$ since boundary corrections at iteration $j$ affect field $\vphi_{j+1/2}$ at iteration $(j+1)$ through the \emph{well-posed} no-droplets Stokes equation \cref{eq:fem}. Unfortunately, this argument is not rigorous, since we have no theoretical handle on the constants involved in the above inequalities, especially because the Stokes equation is not endowed with an elliptic ``maximum principle''.

\begin{remark}\label{rem:traditional}
    By the above argument, we expect the convergence rate to suffer if a droplet is near the domain boundary, or even if two droplets are particularly close to each other. While this should ultimately be subject to a thorough theoretical analysis, we include some tests on the performance of our method in such regimes in \cref{sec:numerics}. As a way to partially counteract the slow convergence due to such small gaps, we believe that it should be possible to design specialized correction problems that replace the single droplet in \cref{eq:singledroplet} with a (small) droplet cluster. We leave this as a direction for future research.
    
    At the same time, we note that such settings are difficult to handle even more so with standard approximation techniques. For instance, finite-element-based approaches require mesh refinements, whereas boundary-integral formulations must see a local increase in the number of quadrature points. We expect our approach to deal with such cases (through iteration) more effectively than alternative options do (through refinement).
\end{remark}

On the practical side, we require a convergence indicator as stopping criterion in \cref{algo:full}. For this we employ a simplified version of the boundary residual \cref{eq:fixedhalf}, with the $L^\infty$-norm replacing the expensive $H^{\pm1/2}$ ones. This is cheaply computable and readily available, since evaluations of the boundary residual are anyway needed when solving \cref{eq:fem} within \cref{algo:full}. More robust, but costly, alternative termination criteria may also be based on global $L^2(\Omega)$- or $H^1(\Omega)$-norms of the full increments $\vu_{j+1}-\vu_j=\vv_j+\vv_{j+1/2}$.

\begin{remark}
    If the chosen convergence indicator provides evidence of diverging behavior, it remains possible to introduce a relaxation parameter $0<\omega<1$, as is typical in iterative (e.g., Richardson) schemes \cite{hackbusch_iterative_2016}. In \cref{algo:full}, a Richardson relaxation can be achieved through a simple rescaling\footnote{This not-so-trivial property holds true due to two combined properties: (i) the linearity of the $(M+1)$ PDEs solved at each iteration and (ii) the exactness of all full-step approximations $(\vu_j,p_j)$ in terms of boundary conditions.} of all fields $\vphi_{j,i}$ by the factor $\omega$. While potentially improving the methods' stability, this strategy may obviously lead to reduced convergence speeds if $\omega$ is chosen too small. In our numerical experiments in \cref{sec:numerics}, relaxation was never needed.
\end{remark}

\subsection{Improving efficiency by exploiting self-similarity}\label{sec:basis}

We now discuss how to accelerate the computation of single-droplet problems \cref{eq:singledroplet} in free space. This is crucial for speeding up calculations in \cref{algo:full}, since the single-droplet problem must be solved $M$ times within each iteration. In the approach described below, we rely on the fact that droplets are \emph{geometrically similar} to each other, i.e., that affine transformations exist mapping each droplet to any other. Since we are presently working under the assumption that droplets are circular, this is obviously true. Extensions to non-circular droplets, as might arise in time-dependent simulations, are discussed in \cref{sec:extensions}. In addition, we will assume that the surface tension is $\widetilde\gamma=0$ for simplicity (as discussed at the beginning of \cref{sec:surrogate}, this assumption is natural for circular droplets), although our presentation naturally generalizes to non-trivial surface tensions.

By geometric similarity of the droplets, the $M$ single-droplet problems \cref{eq:singledroplet} (one per droplet) may be remapped to a reference configuration, viz., the unit disk $\Omega_B^\sharp=B(\vzero;1)$, through droplet-dependent affine mappings. See \cref{app:change} for more details and refer to \cref{sec:singledroplet} for a discussion on how this follows from our choice of an unbounded domain. As such, the task of (approximately) solving \cref{eq:singledroplet} on $M$ different domains and forcing terms can then be replaced by the task of solving \cref{eq:singledroplet} on the unit disk with the $M$ different forcing terms resulting from the remappings. Equivalently, we must evaluate operator
\begin{align}
    \mathcal S:[H^1(\Omega_B^\sharp)]^d&\to[H^1(\bR^d)]^d\times L^2(\bR^d)\label{eq:calS}\\
    \vphi&\mapsto(\vu,p)\text{ solving \cref{eq:singledroplet} with $\Omega_B^i=\Omega_B^\sharp$ and flow field $\vphi$}\nonumber
\end{align}
at $M$ different inputs $\{\widetilde\vphi_{j-1/2,i}\}_{i=1}^M$ (each obtained by remapping $\vphi_{j-1/2,i}$ from \cref{algo:full}), one per droplet, at each iteration.

\begin{remark}\label{rem:muuniform}
    Operator $\mathcal S$ depends also on the viscosity values $\mu(\vx_i)$ and $\mu_F$, although this dependence is kept implicit in our notation. This fact is of relevance if different droplets have different viscosities. In such case, the upcoming construction must be repeated for each viscosity value.
\end{remark}

A cheap-to-evaluate surrogate for $\mathcal S$ can be built through a \emph{(linear) subspace approach}: given a set of $N$ linearly independent vectors $\{\widehat\vphi_k\}_{k=1}^N\subset[H^1(\Omega_B^\sharp)]^d$, for any $\vphi\in[H^1(\Omega_B^\sharp)]^d$ we approximate $\mathcal S(\vphi)\approx\mathcal S(\mathcal P_N\vphi)$, with $\mathcal P_N:[H^1(\Omega_B^\sharp)]^d\to\text{span}\{\widehat\vphi_k\}_{k=1}^N$ a suitable projector operator.

Computational efficiency can be achieved by exploiting the linearity of $\mathcal S$. Indeed, introducing the coordinate map $\mathcal Q_N:[H^1(\Omega_B^\sharp)]^d\to\bR^N$, defined as $\sum_{k=1}^N(\mathcal Q_N\vphi)_k\widehat\vphi_k:=\mathcal P_N\vphi$, as well as the \emph{fundamental solutions} $\{\mathcal S(\widehat\vphi_k)\}_{k=1}^N$, we have
\begin{equation}\label{eq:basiss}
    \vphi\approx\mathcal P_N\vphi=\sum_{k=1}^N(\mathcal Q_N\vphi)_k\widehat\vphi_k\quad\Rightarrow\quad\mathcal S(\vphi)\approx\mathcal S_N(\vphi):=\mathcal S(\mathcal P_N\vphi)=\sum_{k=1}^N(\mathcal Q_N\vphi)_k\mathcal S(\widehat\vphi_k).
\end{equation}
This means that, after precomputing the fundamental solutions, evaluating $\mathcal S_N(\vphi)$ comes at the only cost of evaluating the coordinate map $\mathcal Q_N$ and a linear combination of fundamental solutions.

\begin{remark}\label{rem:polynomials}
    In practice, we choose the sampled flow fields $\{\widehat\vphi_k\}_{k=1}^N$ to have polynomial components up to a prescribed degree $D\geq0$. This has the advantage of implementation simplicity, as well as the guarantee of spectral approximation convergence for spatially smooth fields $\vphi$, although alternatives may be considered. Moreover, it allows evaluating the projector $\mathcal P_N$ and the coordinate map $\mathcal Q_N$ through simple (least-squares) polynomial interpolation. More details on this are provided in \cref{app:poly} for $d=2$ dimensions.
\end{remark}

This strategy employs an \emph{offline-online} decomposition: a large(r) cost is incurred in some precomputation so that later numerical operations may be made cheap(er). Namely, let $c_{\text{high-fi}}$ denote the cost of a single high-fidelity evaluation of $\mathcal S$ through CutFEM or boundary-integral-based method, and assume that evaluating $\mathcal S_N$ has $\mathcal O(N^3)\ll c_{\text{high-fi}}$ cost because of the coordinate map $\mathcal Q_N$. If $N_{\text{eval}}$ evaluations of $\mathcal S$ at different inputs are required, then an $\mathcal O(N_{\text{eval}}c_{\text{high-fi}})$ direct-evaluation cost is replaced by an $\mathcal O(Nc_{\text{high-fi}}+N_{\text{eval}}N^3)$ approximate-evaluation cost.

\begin{figure}
    \centering
    \begin{tabular}{ccc}
        $\widehat\vphi_k(\vx)=\begin{pmatrix}x_2\\x_1\end{pmatrix}$ & $\widehat\vphi_k(\vx)=\displaystyle\frac12\begin{pmatrix}-2x_1x_2\\3x_1^2+x_2^2\end{pmatrix}$ & $\widehat\vphi_k(\vx)=\displaystyle\frac16\begin{pmatrix}3x_1^2x_2-x_2^3\\x_1^3-3x_1x_2^2\end{pmatrix}$\\[3mm]
        \begin{overpic}[percent,trim={3mm 0 375mm 0},clip,height=3.2cm]{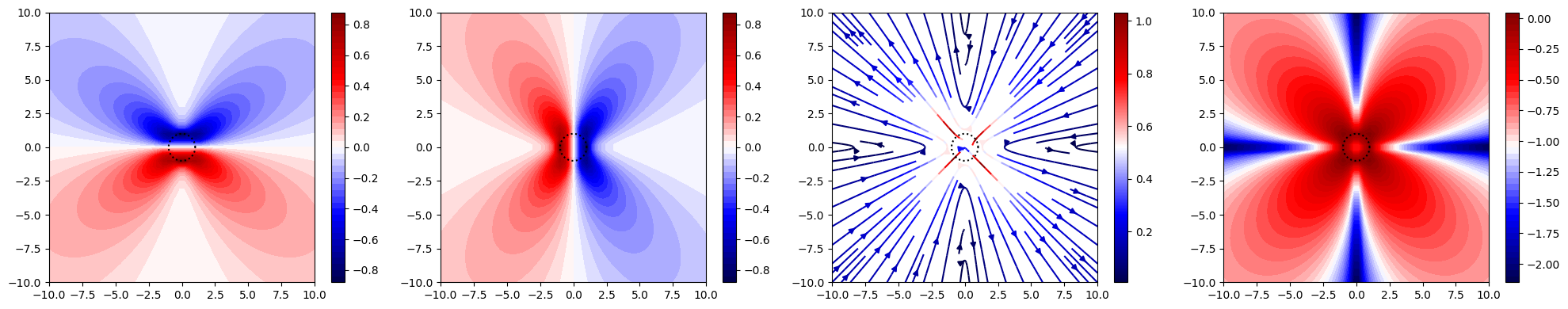}
            \put(42,-4){$x_1$}
            \put(-8,40){$x_2$}
        \end{overpic}&
        \phantom{ol}\begin{overpic}[percent,trim={3mm 0 375mm 0},clip,height=3.2cm]{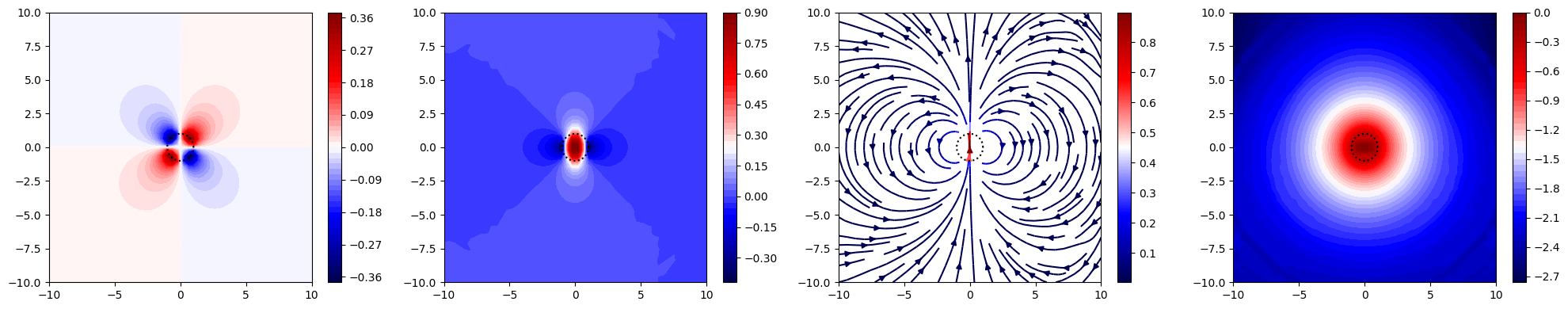}
            \put(41.5,-4){$x_1$}
            \put(-8,40){$x_2$}
        \end{overpic}&
        \phantom{o}\begin{overpic}[percent,trim={3mm 0 375mm 0},clip,height=3.2cm]{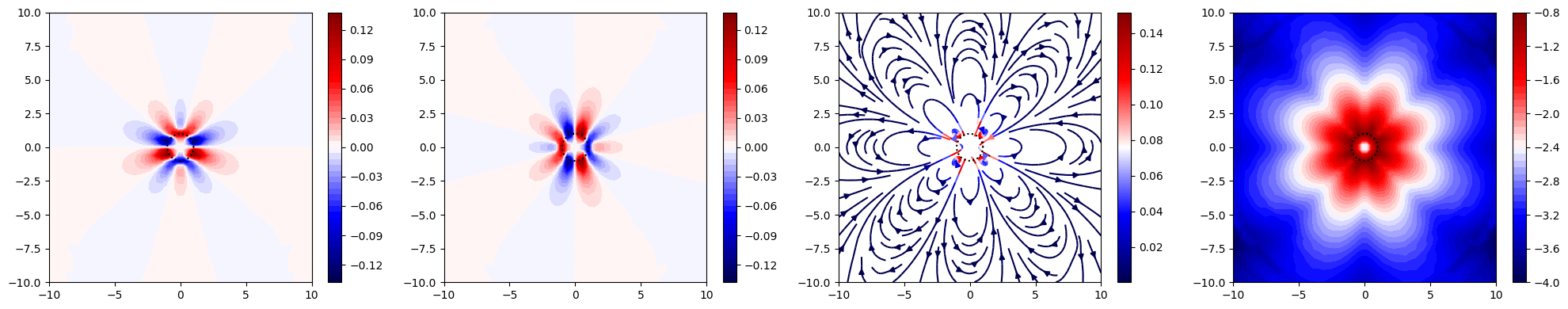}
            \put(41,-4){$x_1$}
            \put(-8,40){$x_2$}
        \end{overpic}
    \end{tabular}
    \caption{First components $(\widehat\vv_k)_1$ of flow fields of fundamental solutions $(\widehat\vv_k,\widehat q_k)=\mathcal S(\widehat\vphi_k)$ corresponding to some linear (left), quadratic (center), and cubic (right) flow fields $\widehat\vphi_k$, reported on top of the respective plots. In all cases, $\mu_F=1$ and $\mu|_{\Omega_B^\sharp}\equiv100$.}
    \label{fig:basis}
\end{figure}

Of course, an error is incurred due to the truncation of the basis at index $N<\infty$. It is relatively straightforward to derive \emph{a priori} bounds on the basis-truncation error. Specifically, since \cref{eq:singledroplet} is well-posed \cite{ladyzhenskaya1963,Pozrikidis1992}, we can find a $\vphi$-independent constant $C$ such that
\begin{multline*}
    \norm{\mathcal S(\vphi)-\mathcal S_N(\vphi)}_{[H^1_0(\bR^d\setminus\partial \Omega_B^\sharp)]^d\times[L^2(\bR^d)\setminus\bR]}\leq\\
    \leq C\big|\mu(\vzero)-\mu_F\big|\left(\norm{\uE(\vphi-\vphi_N)}_{L^2(\Omega_B^\sharp)} + \norm{\uE(\vphi-\vphi_N)\vn}_{H^{1/2}(\partial \Omega_B^\sharp)}\right).
\end{multline*}
by linearity of $\mathcal S$. As such, we can control the error in the approximation of $\mathcal S(\vphi)$ by lowering the subspace-projection error. In our experiments, we found that a small $N\lesssim10$ is enough to achieve good accuracy in $d=2$ dimensions. Since, in \cref{algo:full}, $N_{\text{eval}}=n_{\text{iter}}M$ may much larger than such $N$, our approach is justified.

As a final point, we wish to comment on how one may precompute the $N$ fundamental solutions, corresponding to setting $\vphi=\widehat\vphi_k$ in \cref{eq:singledroplet} for $k=1,\ldots,N$. In our numerical tests, we compute these through a boundary-integral formulation, which is natural due to the unboundedness of the domain. Alternatively, finite-elements or finite-differences techniques on truncated domains might also be suitable. In \cref{fig:basis} we display some sample fundamental solutions corresponding to the polynomial basis derived in \cref{app:poly}.

\subsection{Extensions}\label{sec:extensions}
As discussed in \cref{sec:basis}, an efficient solve of \cref{eq:singledroplet} within \cref{algo:full} assumes geometric similarity between each droplet and the reference configuration (the unit disk). This assumption allows a speed-up in each call to $\text{S\_droplet}$ from approximately $\mathcal{O}(N_{\text{BIE}}^3)$, with $N_{\text{BIE}}$ being the number of degrees of freedom used to discretize \cref{eq:singledroplet} by boundary-integral equations, cf.~\cref{rem:bie}, to $\mathcal{O}(N^3)$, with $N$ the number of fundamental solutions used in \cref{eq:basiss}. However, droplets may generally differ in shape, for instance and notably, within a time-domain simulation \cref{eq:cutfemtime}. This requires modifications to the strategy described so far.

Firstly, the assumption of circular droplets allows drastic simplifications in the handling of surface tension, cf.~\cref{sec:surrogate}. If droplets have general shapes, no analytic correction exists to account for surface tension. As such, surface tension remains part of the interface force balance that is corrected through iteration: a nonzero $\widetilde\gamma$ must be included in \cref{eq:singledroplet}. Iterated correction of the surface tension term can be handled within the call to \textup{S\_droplet}, along with the other volume and interface correction terms.

Secondly, and more importantly for computation, our efficient approach from \cref{sec:basis} cannot be applied if drops are not self-similar. We suggest here three possible options for dealing with this.

\paragraph{Rigid-droplet approximation.}
Droplet deformations can be neglected at the cost of accuracy. In this setting, each time step induces an easy-to-handle, rigid-body motion of each droplet. This idea is described in \cref{sec:time} for circular droplets.

\paragraph{Direct computation.}
One can skip the idea from \cref{sec:basis} altogether, directly casting and solving \cref{eq:singledroplet} on each droplet without remapping. This incurs the above-mentioned $\mathcal{O}(N_{\text{BIE}}^3)$ cost \emph{per droplet}, \emph{per iteration}. Since a moderate number of degrees of freedom $N_{\text{BIE}}$ may be needed for a sufficiently accurate representation of the solution $(\vv,q)$ of \cref{eq:singledroplet}, this strategy can incur significant cost.

\paragraph{Shape parametrization.}
Depending on the application, it might be possible to predict in which ways the droplets will deform, allowing one to design a \emph{parametrized} family of shapes that the droplets are ``allowed'' to take \cite{minakowska_finite_2021,von_wahl_using_2021}. In this setting, the collection of fundamental solutions corresponding to such shapes becomes parameter-dependent too. Cheaply approximating the fundamental solutions induced by an arbitrary parameter choice (i.e., a given droplet shape) may be feasible through interpolation or machine-learning techniques \cite{Biros19,minakowska_finite_2021} starting from a discrete database of fundamental solutions, obtained by ``sampling'' the parametrized family of shapes. We leave this extension as future work.

\section{Complexity and suggested computational pipelines}\label{sec:timing}

We now discuss the computational complexity of the proposed surrogate model, both at training and evaluation time.  Let $M$ denote the number of droplets and $N$ the number of fundamental solutions used to approximate the action of the single-droplet solution operator $\mathcal S$ in \cref{eq:basiss}. To streamline our discussion, we will assume that $M>N$, since (i) our numerical tests show that a small number $N$ of fundamental solutions is usually sufficient to achieve modest accuracy (say, $10^{-4}$), cf.~the theoretical discussion in \cref{sec:basis} as well as our numerical tests in \cref{sec:numerics}, and (ii) the many-droplet regime is ultimately our target.

We specialize our discussion to two useful computational pipelines that use \cref{algo:full,eq:basiss} to emulate \cref{eq:cutfem}. They apply in different scenarios, depending on the target application.

\subsection{Pipeline 1: static emulation of the solution field}\label{sec:field}

A first setting where our surrogate model is useful is when one aims to approximately compute the stationary solution of the Stokes interface problem \cref{eq:cutfem}. This is relevant, for instance, when investigating the effect of the droplets $\Omega_B$ on the surrounding flow field. In this context, one may wish to evaluate $(\vu,p)$ at a collection of $n_{\text{eval}}$ points in $\Omega$, e.g., for visualization or postprocessing purposes. To this end, it suffices to execute \cref{algo:full} with \cref{eq:basiss} and subsequently evaluate the resulting surrogate at the desired target points. The overall computational complexity of this procedure can be derived as follows.

We first ignore the training procedure (the loop in \cref{algo:full}) and focus only on the evaluation task. If $n_{\text{iter}}$ iterations are carried out within \cref{algo:full}, the surrogate is the sum of $(2n_{\text{iter}}+1)$ terms: the surrogate flow field is
\begin{equation*}
    \widetilde\vu=\underbrace{\vu_0+\vv_{1/2}+\ldots+\vv_{n_{\text{iter}}-1/2}}_{\widetilde\vu_{\text{boundary}}}+\underbrace{\vv_0+\ldots+\vv_{n_{\text{iter}}-1}}_{\widetilde\vu_{\text{droplets}}}.
\end{equation*}
The surrogate pressure field has a similar expansion, which we ignore for simplicity in the following.

Note that, above, we have grouped together all half-step corrections with the baseline solution $\vu_0$ into $\widetilde\vu_{\text{boundary}}$, since all $(n_{\text{iter}}+1)$ addends relate to global Stokes solutions computed via $\text{S\_boundary}$, whose purpose is enforcing the boundary conditions on $\partial\Omega$, disregarding droplets. In a computational framework, we may assume that all such field have been computed using the same (e.g., FEM-based) discretization of \cref{eq:fem} (with different boundary conditions). By linearity, we may thus add all contributions together into a single ``vector of degrees of freedom'', so that evaluating $\widetilde\vu_{\text{boundary}}$ has $n_{\text{iter}}$-independent evaluation cost $c_{\text{boundary}}$.

By a similar token, we can exploit the similarity between all terms adding up to $\widetilde\vu_{\text{droplets}}$. Namely, we can express
\begin{equation*}
    \vv_j=\sum_{i=1}^M\sum_{k=1}^N\alpha_{i,j,k}\widehat\vv_k\circ\eta_i
\end{equation*}
where $M$ is the number of droplets, $\eta_i$ are the affine mappings defined in \cref{algo:full}, and $\alpha_{i,j,k}$ are the fundamental-solution-expansion coefficients $(\mathcal Q_N\vphi_{j-1/2,i})_k$, cf.~\cref{eq:basiss}. From here, we can compute
\begin{equation}\label{eq:surrdroplet}
    \widetilde\vu_{\text{droplets}}=\sum_{j=0}^{n_{\text{iter}}-1}\sum_{i=1}^M\sum_{k=1}^N\alpha_{i,j,k}\widehat\vv_k\circ\eta_i=\sum_{i=1}^M\underbrace{\left(\sum_{j=0}^{n_{\text{iter}}-1}\sum_{k=1}^N\alpha_{i,j,k}\widehat\vv_k\right)\circ\eta_i}_{\widetilde\vv_i:=}.
\end{equation}
Both indices $j$ and $k$ in the above expression can be collapsed, which leads to precomputing the $M$ fields $\widetilde\vv_i$, each corresponding to the total effect that a droplet has on the field. In this way, we can make the cost of evaluating $\widetilde\vu_{\text{droplets}}$ independent of $N$ and $n_{\text{iter}}$.

On the other hand, it is not computationally possible to achieve an accurate representation of $\widetilde\vu_{\text{droplets}}$ without incurring an $\mathcal{O}(M)$ scaling, e.g., by collapsing the ``droplet index'' $i$ in \cref{eq:surrdroplet}. Indeed, each droplet contributes to the total solution with a field that displays very localized features around the droplet itself, cf.~our numerical results below. An accurate numerical representation of the total field $\widetilde\vu_{\text{droplets}}$ must thus necessarily resolve such effects, resulting in a linear scaling in $M$.

In summary, the cost of a single surrogate evaluation scales as $\mathcal{O}(Mc_{\text{droplet}}+c_{\text{boundary}})$, with $c_{\text{droplet}}$ being the $n_{\text{iter}}$- and $N$-independent cost of evaluating each of the fields $\widetilde\vv_i$ in \cref{eq:surrdroplet} at a single point.

\begin{remark}\label{rem:femcost}
    In several settings, e.g., if many droplets are present ($M\gg1$) or if \cref{eq:fem} is discretized on a coarse mesh, the cost $c_{\text{boundary}}$ is negligible. In this regime, which we assume for simplicity from now on, the total cost of a single surrogate evaluation scales linearly with $M$.
\end{remark}

We now go back to the training procedure, consisting of an iterative correction loop. At the $j$-th iteration, the dominant costs arise from four sources:
\begin{enumerate}
    \item Assembling the flow field $\vphi_{j-1/2,i}$ for each droplet $i=1,\ldots,M$. Since this requires evaluating the current surrogate, cf.~the definition of $\vphi_{j-1/2,i}$, this has asymptotic cost $\mathcal{O}(M^2c_{\text{droplet}})$, cf.~\cref{rem:femcost}.
    \item\label{step:exp} Solving \cref{eq:singledroplet} to find the volume corrections $(\vv_{j,i},q_{j,i})$ for each droplet $i=1,\ldots,M$. Assuming that \cref{eq:basiss} is used and that the basis-expansion coefficients $\alpha_{i,j,k}$ are computed by interpolation, we can expect an $\mathcal{O}(M^2Nc_{\text{droplet}}+MN^3)$ cost, due to having to evaluate each $\vphi$ at $N$ points and solve $M$ size-$N$ interpolation problems. The complexity can be simplified to $\mathcal{O}(M^2Nc_{\text{droplet}})$ if $M\gg N$.
    \item Assembling and evaluating the boundary errors $\vgcorr{j+1/2}$ and $\vfcorr{j+1/2}$ at $\Gamma_D$ and $\Gamma_N$, respectively. Assuming that $n_{\text{BC}}$ boundary evaluations are needed in order for $(\vu_{j+1/2},q_{j+1/2})$ to be then computed through $\text{S\_boundary}$, this costs $\mathcal{O}(Mc_{\text{droplet}}n_{\text{BC}})$.
    \item Solving \cref{eq:fem} within $\text{S\_boundary}$ to find $(\vu_{j+1/2},q_{j+1/2})$. We denote the corresponding cost by $c_{\text{Stokes}}$.
\end{enumerate}

In the setting described in \cref{rem:femcost}, the second step has the largest complexity. Notably, the last step has especially low cost $c_{\text{Stokes}}$ if the matrix associated with the discretization of \cref{eq:fem} is pre-factorized as suggested in \cref{sec:full:full}. In such cases, the training complexity scales as $\mathcal{O}(M^2Nc_{\text{droplet}}n_{\text{iter}})$ overall.

\begin{remark}\label{rem:bie}
    More generally, the cost of the most expensive step (\ref{step:exp} above) depends on the specific strategy used to obtain $(\vv_{j,i},q_{j,i})$. For instance, computing such fields ``on the fly'' by boundary-integral equations, discretized using $N_{\textup{BIE}}$ degrees of freedom, costs approximately $\mathcal{O}(M^2N_{\textup{BIE}}c_{\textup{droplet}}+MN_{\textup{BIE}}^3)$.    
\end{remark}

Putting all together yields a complexity of
\[
    \mathcal{O}\left(M\big(MNn_{\text{iter}} + n_{\text{eval}}\big)c_{\text{droplet}}\right).
\]
In our experiments, we have observed that, in some cases, the cost of the evaluation phase may dominate, especially when $n_{\text{eval}}$ is large. This can occur, for example, when targeting high-resolution visualizations of the solution field. Concerning this, it is worth noting that surrogate evaluations are ``embarassingly parallelizable'', which may allow for substantial speedups in practice.

\begin{remark}\label{rem:dd}
    The quadratic scaling in $M$ is a symptom of the pairwise interactions between droplets during training. It might be possible to accelerate the training, reducing its complexity to $M\log M$, through spatially hierarchical strategies in the style of the fast multipole method \cite{multipole}. To achieve this, one leverages the decay of the interaction forces (here, the fundamental solutions $\{\mathcal S(\widehat\vphi_k)\}_k$) as the distance between droplets increases to replace exact one-to-one interactions between distant droplets with approximate many-to-many interactions. Work on this idea is under way.
\end{remark}

\subsection{Pipeline 2: time-domain simulation of moving droplets}\label{sec:time}

A second setting with practical relevance is that where the positions and shapes of the droplets $\Omega_B$ vary with time, since droplets are advected and deformed by the flow field solving \cref{eq:cutfem}. More rigorously, we consider the time-dependent problem \cref{eq:cutfemtime}.

To handle this problem efficiently, we propose a surrogate-based evolution strategy. The key idea is to decouple the geometric evolution from the fluid solve by leveraging the surrogate model to rapidly evaluate the local velocity field acting on each droplet. At each time step $t_n$ in a time grid\footnote{An adaptive time-stepping scheme based on the surrogate solution may of course be applied, e.g., where finer steps are taken if droplets are about to collide.} $0=t_0<t_1<\ldots<t_{N_{\text{time}}}$, the following procedure is carried out:
\begin{enumerate}
    \item\label{item:step} Given the current droplet configuration $\Gamma(t_n)$, use \cref{algo:full} to evaluate the surrogate model to obtain the approximate velocity field $\widetilde\vu(t_n)$.
    \item Place $n_{\text{adv}}$ ``markers'' on each droplet (e.g., uniformly spaced on its boundary) $\{\vx_{l,i}(t_n)\}_{l=1}^{n_{\text{adv}}}\subset\Gamma_i(t_n)$ for $i=1,\ldots,M$. Evaluate $\widetilde\vu(t_n)$ there.
    \item Update droplet positions and shapes by a forward-Euler step over all droplets' boundaries: \[\vx_{l,i}(t_{n+1})=\vx_{l,i}(t_n)+(t_{n+1}-t_n)\widetilde\vu(t_n)|_{\vx_{l,i}(t_n)}.\]
    \item Update $n\gets n+1$ and go to step \ref{item:step}.
\end{enumerate}
This approach allows the flow–structure coupling to be approximated without solving a full Stokes system at every time step.

It is crucial to observe that the above strategy will generally result in substantial deformations in the droplets' shapes, since the markers $\{\vx_{l,i}\}_{l=1,i=1}^{n_{\text{adv}},M}$ are advected independently. This ultimately may result in unstable behavior, requiring the use of relaxation strategies \cite{Biros19}. Moreover, as our methods have been so far developed for circular droplets, we cannot yet handle general shape deformations. We refer to \cref{sec:extensions} for discussion on that.

To recover tractability, we suggest a simplified advection strategy, which shifts each droplet by a rigid translation by averaging the corresponding markers' velocities:
\begin{equation}\label{eq:timestep_markers}
    \vx_{l,i}(t_{n+1})=\vx_{l,i}(t_n)+(t_{n+1}-t_n)\frac1{n_{\text{adv}}}\sum_{l'=1}^{n_{\text{adv}}}\widetilde\vu(t_n)|_{\vx_{l',i}(t)}.
\end{equation}
This formula preserves droplet shapes at the cost of accuracy, especially when droplets deform substantially as a result of interactions with nearby droplets or boundaries. The computational cost of the resulting time-domain simulation scales as $\mathcal{O}(M^2(Nn_{\text{iter}} + n_{\text{adv}})c_{\text{droplet}}N_{\text{time}})$.

\section{Numerical examples}\label{sec:numerics}

We perform a suite of numerical tests to validate the accuracy and efficiency of the proposed method for multiphase Stokes problems in $d=2$ spatial dimensions. We consider a viscosity contrast of  $100$, with piecewise-constant viscosity $\mu=\mu_F=1$ outside the droplets and $\mu=100$ inside them. We set the volume forcing $\vforce=\vzero$ and the surface tension coefficient $\gamma=10$. Unless otherwise specified, we employ the following surrogate setup:
\begin{itemize}
    \item We use \cref{algo:full}, stopping the fixed-point iterations, indexed by $j$, according to the criterion
    \begin{equation}\label{eq:fixedhalflinf}
        \norm{\vv_j}_{L^\infty(\Gamma_D)} + \norm{\uT(\vv_j,q_j;\mu)}_{L^\infty(\Gamma_N)}\leq\textup{tol}=10^{-3},
    \end{equation}
    cf.~\cref{sec:termination}. This choice of $\textup{tol}$ is validated in our first test.
    \item We use \cref{eq:basiss} to more efficiently solve the single-droplet auxiliary problem \cref{eq:singledroplet}. Namely, the flow field $\vphi$ is approximated through a polynomial of degree $D=2$, which results in $N=6$ fundamental solutions, cf.~\cref{sec:basis,app:poly}. The subspace projection $\vphi\mapsto\vphi_N$ is carried out via least-squares interpolation using $8D$ uniformly distributed points on $\Omega_B^i$, which is intended to mimic $L^2(\Omega_B^i)$-projection by quadrature. This choice of $D$ is validated in our first test.
    \item The lowest-fidelity solution $(\vu_0,p_0)$ is computed by the finite-element method (FEM) on a relatively coarse grid. The same method is used also to solve \cref{eq:fem} through $\text{S\_boundary}$ within \cref{algo:full}. For the sake of efficiency, a sparse LU factorization of the FEM matrix is precomputed and stored to disk.
    \item To test accuracy, we compute reference solutions using CutFEM \cite{acta25} using the setup described in \cite{FraZah19}. For the numerical representation of the interfaces, we use the level-set method \cite{OshFed01,Sethian01}, where the scalar function $\phi=\phi(\vx)$ is used to characterize the droplets region $\Omega_B:=\{\vx\in\Omega:\phi(\vx)>0\}$ and to discretize \eqref{eq:cutfem} using CutFEM, cf.~\cref{app:cutfem}. The approximation error is computed on the velocity field in the $L^2(\Omega)$ and $L^\infty(\Omega)$ norms.
\end{itemize}

Unless otherwise specified, we run out tests in Python 3.12 on a workstation with an Intel\textsuperscript{\textregistered} Core\textsuperscript{\texttrademark} Ultra 7 processor 265 and $64$GB of RAM. FEM and CutFEM are implemented in C++ as part of the open-source CutFEM library \cite{github}.

\begin{center}
\begin{framed}
Reproducibility code is available at \url{https://github.com/CutFEM/hierarchical2DDropletStokes}.
\end{framed}
\end{center}

\subsection{Convergence test}

We first study the accuracy of our method by looking at the approximation error in the flow field $\vu^{(n_{\textup{iter}})}-\vu$. We consider a 2D square domain modeling a simple right-angle pipe junction: homogeneous Dirichlet conditions are imposed on the bottom and right boundaries, whereas two parabolic flow profiles (one in-going, one out-going) are imposed on the top and left boundaries through Dirichlet condition.

\begin{figure}
    \centering
    \includegraphics{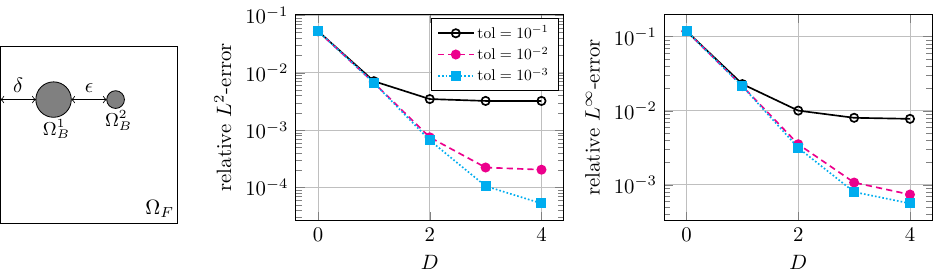}
    \caption{Left: numerical setup with $M=2$ droplets at distance $\delta$ from the boundary and $\varepsilon$ from each other. Middle and right: convergence of surrogate error with respect to truncation degree $D$ for $\delta=\epsilon=0.2$, measured in the $L^2(\Omega)$ and $L^\infty(\Omega)$ norms, respectively.}
    \label{fig:conv}
\end{figure}

We consider two droplets ($M=2$) with radii $r_1=0.1$ and $r_2=0.05$, centered at $\vx_1=(0.3,0.7)$ and $\vx_2=(0.65,0.7)$. We first study how the approximation error changes when varying (i) the approximation degree $D$ used to approximate \cref{eq:singledroplet} with \cref{eq:basiss} and (ii) the tolerance $\textup{tol}$ set on the boundary residual norm \cref{eq:fixedhalflinf}, used to stop the fixed-point iterations in \cref{algo:full}. Recall that the numbers of fundamental solutions $N$ needed for degree $D$ are $N=0$ for $D=0$ and $N=4D-2$ for $D>0$.

The number of fixed-point iterations increases as $\textup{tol}$ decreases: tolerances $10^{-1}$, $10^{-2}$, and $10^{-3}$ require $1$, $3$, and $5$ iterations, respectively. The number of fixed-point iterations may generally depend on $D$ although no variations were observed here. The relative approximation errors are shown in \cref{fig:conv} (middle and right). First, we note that the approximation error using just lowest-degree fundamental solutions ($D=1$) is already about one order of magnitude lower than that achieved by the lowest-fidelity solution $(\vu_0,p_0)$, which corresponds to $D=0$.

An early termination of the fixed-point iterations due to a too large $\textup{tol}$ leads to saturation with respect to the polynomial degree $D$. Indeed, despite the more accurate approximation of the local volume residual on the droplet, cf.~\cref{sec:basis}, large residuals are left when the fixed-point iterations end. Focusing now on the smallest tested value of $\textup{tol}$, we observe exponential convergence with respect to $D$ for both $L^2$ and $L^\infty$ errors. This is expected since we are performing a spectral (truncated-Taylor) approximation of a smooth function, namely, the flow field $\vphi$ that is given as input to the linear operator $\mathcal S$, cf.~\cref{sec:basis}.

As a test to investigate the convergence behavior of the fixed-point iterations with respect to the geometric configuration of the droplets, we fix $D$ and place the droplets at different distances from the domain boundaries and from each other. See \cref{fig:conv} (left). More specifically, given $\delta$ and $\epsilon$, we place the droplets' centers at $\vx_1=(\delta+0.1,0.7)$ and $\vx_2=(\delta+\epsilon+0.25,0.7)$, keeping the radii fixed. This places the first droplet at distance $\delta$ from the left boundary and the two droplets at distance $\epsilon$ from each other, cf.~\cref{fig:conv} (left).

\begin{figure}
    \centering
    \includegraphics{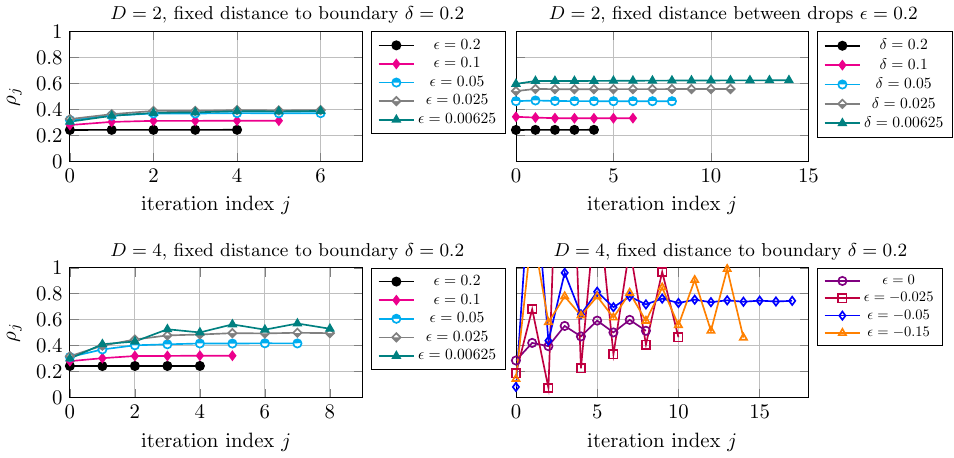}
    \caption{Contraction rate of fixed-point iterations for the setup in \cref{fig:conv} (left) with $\textup{tol}=10^{-4}$. The polynomial degree $D$ is reported above each plot. The bottom-right plot corresponds to non-physical configurations with overlapping droplets ($\varepsilon\leq0$).}
    \label{fig:iter}
\end{figure}

We choose a small $\textup{tol}=10^{-4}$, which results in more iterations than above, in order to allow the convergence behavior to fully develop, leading to a more clear comparison of different values of $\delta$ and $\epsilon$. We run \cref{algo:full} to build a surrogate model for each configuration, keeping track of the \emph{contraction rate}
\begin{equation*}
    \rho_j=\frac{\norm{\vv_{j+1}}_{L^\infty(\Gamma_D)} + \norm{\uT(\vv_{j+1},q_{j+1};\mu)}_{L^\infty(\Gamma_N)}}{\norm{\vv_j}_{L^\infty(\Gamma_D)} + \norm{\uT(\vv_j,q_j;\mu)}_{L^\infty(\Gamma_N)}}.
\end{equation*}
This provides us with information on the contractive properties of the fixed-point operator and, by proxy, on the convergence properties of the fixed-point loop: larger values of $\rho_j$ indicate slower convergence, with $\rho_j\geq1$ being a symptom of no convergence at all. The contraction rate is plotted in \cref{fig:iter} for four different configurations and varying values of $\delta$ and $\epsilon$. Convergence was reached in all cases. We can make the following remarks:
\begin{enumerate}
    \item The top two plots allow us to compare the effects of the distance $\delta$ between droplets and boundaries versus the distance $\epsilon$ between droplets only, since one of the two distances is kept fixed while the other varies. Decreasing either $\delta$ or $\epsilon$ increases the contraction rate, causing the need for more iterations, cf.~\cref{rem:traditional}. However, a small distance $\delta$ to the domain boundary seems to have a stronger effect on convergence than a small distance $\epsilon$ between droplets, suggesting that ``hard'' boundaries hinder our method more than ``soft'' interfaces.
    \item The left two plots allow us to compare the effects of the truncation degree $D$ on convergence. We observe that nearby droplets seem to slow convergence down slightly more for larger degrees $D$. This is likely related to conditioning issues in iterating over the coefficients of higher-degree fundamental solutions. We also observe slight oscillations developing in $\rho_j$ for $D=4$ and $\epsilon=0.00625$. These arise as symptoms of the strong two-way interactions between close droplets, which fixed-point iterations must approximate whenever $M>1$, cf.~\cref{sec:full:full}. Despite the oscillations, the algorithm still behaves in a tame way, converging at a fairly rapid pace.
    \item In the bottom right plot, we stress-test our method for $\epsilon\leq0$, which correspond to overlapping droplets, which are non-physical since the corresponding interface conditions in \cref{eq:cutfem} are incompatible. For small overlaps, the results are not so different from those in the bottom-left plot. On the other hand, more substantial overlaps lead to badly behaved iterations, with wild oscillations in the contraction rates, which in some cases exceed unity, a symptom of potential divergence. This being said, our algorithm eventually reaches convergence in all tests, even when droplets are concentric ($\epsilon=-0.15$), although more iterations are needed as a result of larger contraction rates. This provides empirical evidence of the robustness of our iterative approach, which might be of relevance in overlapping or almost-overlapping cases, where standard techniques such as CutFEM or boundary-integral methods may fail or require substantially increased computational effort.
\end{enumerate}

\subsection{Time-domain simulation}\label{sec:timetest}

\begin{figure}
    \centering
    \includegraphics{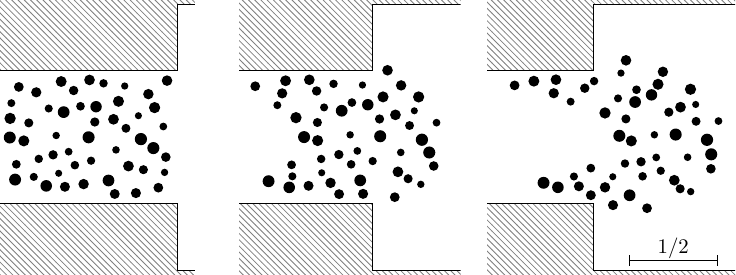}
    \caption{Snapshots of $M=50$ droplets at $t=0$ (left), $t=2$ (center), and $t=4$ (right).}
    \label{fig:snapshots}
\end{figure}

As a second, more realistic test, we consider a time-domain problem of the form \cref{eq:cutfemtime}. The problem concerns the time-evolution of $M=50$ droplets in a 2D rectangular pipe that suddenly widens, cf.~\cref{fig:snapshots}. The flow is driven by a simple parabolic inflow profile, whereas no-slip conditions are imposed on the pipe boundaries. The initial positions and radii of the droplets are randomly chosen. We fix a timestep $\Delta t=0.005$ and a number $N_{\text{time}}=800$ of time-steps, reaching the time horizon $t=t_{N_{\text{time}}}=4$.

Our ultimate target is approximating the $M$ trajectories of each droplet's center of mass. We compare four methods for approximating these:
\begin{itemize}
    \item CutFEM on a relatively fine mesh ($\sim4\cdot 10^6$ DoFs). Since the droplets locations and shapes evolve in time, a time-dependent level-set function $\phi=\phi(t,\vx)$ is used. Given the level-set function at time $t$ and the fluid velocity $\vu(t,\vx)$ (computed by solving the Stokes equation with CutFEM \cref{app:cutfem}), the level-set function at time $(t+\Delta t)$ is found by evolving the interface $\Gamma(t)$ through a discretized version \cite{FraZah19} of the advection equation $\partial_t \phi + \vu \cdot \nabla{\phi} = 0$, valid in the whole $\Omega$ and with initial condition given from the level-set function at the previous time step. In order to avoid instabilities, the level-set function is reinitialized every $5$ time steps as described in \cite{SuFa99}. We treat the resulting trajectories from this scheme as ``reference'' in our tests. Each time-step takes about $2$ minutes on our workstation, with $\sim25$ seconds spent on CutFEM assembly, $\sim75$ seconds spent on solving the discrete Stokes problem, and $\sim20$ seconds spent to assemble and solve the level-set-advection equation. Level-set reinitialization takes $\sim30$ additional seconds every $5$ time steps.
    \item Lowest-fidelity Stokes flow $\vu_0$. At each time-step, the droplets are advected as passive tracers, taking the mean of $\vu_0$ at $n_{\text{adv}}=100$ markers uniformly distributed on each droplet's boundary, cf.~\cref{eq:timestep_markers}. Since field $\vu_0$ is constant in time, this strategy is extremely cheap, taking only $\sim0.4$ milliseconds per time-step. This time does not include the one-time overhead of computing $\vu_0$, which, since no droplets are present in \cref{eq:fem}, can be done on a coarse mesh at extremely low cost ($\sim4$ seconds on our workstation).
    \item Our surrogate with the framework from \cref{sec:time}. We use the same advection strategy as above, although the field approximation resulting from \cref{algo:full} replaces the time-independent Stokes flow $\vu_0$. At each time-step, our method performs $5$ fixed-point iterations (automatically determined based on $\textup{tol}$) given the current droplet configuration, resulting in an average compute time of $\sim1.5$ seconds per time-step. About $80\%$ of this time is spent solving Stokes problems of the form \cref{eq:fem} by FEM on a coarse mesh (using triangular solvers on LU-pre-factorized FEM matrices).
    \item CutFEM without droplet deformations. We run the CutFEM code on the same mesh, but assuming that droplets do not deform and thus remain circular throughout the simulation. This assumption, consistently with our surrogate, slightly reduces the computational cost of advecting the droplets since the level set can be advected through rigid-body motion. Each time-step takes about $100$ seconds, without the need for any reinitialization.
\end{itemize}

\begin{figure}
    \centering
    \includegraphics{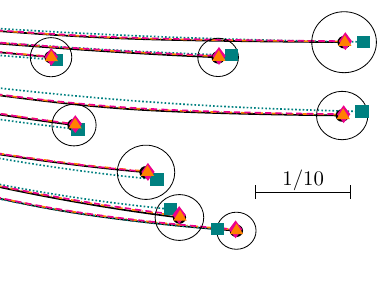}~\hspace{2mm}~\includegraphics{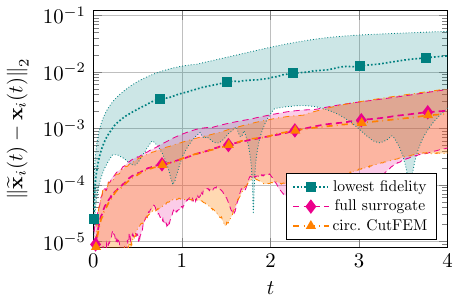}
    \caption{Left: Zoom of droplet trajectories near the bottom-right-most droplet. We show reference solution (black circles, full lines), lowest-fidelity surrogate (teal squares, dotted lines), full surrogate (magenta diamonds, dashed lines), and no-deformation CutFEM solution (orange triangles, dash-dotted lines). For readability, markers are shown only at the final time $t=4$. Thin black lines denote boundaries of reference droplets at the final time. Right: error in lowest-fidelity, full surrogate, and no-deformation CutFEM trajectories over time. Each envelope shows the spread of the error, with lines marking the minimum, median, and maximum error over all droplets.}
    \label{fig:trajectory}
\end{figure}

In \cref{fig:trajectory} (left) we display some of the droplets' trajectories obtained using the four methods. The surrogate trajectories are rather close to the reference ones, especially when compared to the lowest-fidelity results. This observation is confirmed quantitatively in \cref{fig:trajectory} (right), where we show the trajectory approximation error. We can see that the full surrogate lowers the error by about an order of magnitude, compared to the lowest-fidelity approximation. Notably, the surrogate achieves a $\times80$ speed-up with respect to the reference code while attaining a fair accuracy. The lowest-fidelity ``passive-tracer'' surrogate is also remarkable, since, despite a one-order-of-magnitude-larger error, it achieves a $\times3\cdot10^5$ speed-up (excluding the single Stokes solve).

We look now at the non-deformable CutFEM results, whose computational cost is about $60$ times larger than our full surrogate's. As shown in \cref{fig:trajectory} (right), the method's accuracy is extremely close to our surrogate's. This proves that the main source of error in our surrogate strategy is the circularity assumption, not the iterations or the subspace approximation of S\_droplet. As such, we have good hopes for the accuracy achievable by our approach, once deformations are incorporated in the pipeline through future work.

As a final comparison, we also tried to run our CutFEM codes, both with and without droplet deformation, on a coarsened spatial mesh with about $20$ times less degrees of freedom. The associated runtime per iteration was about $9$ seconds in both cases, making these approaches comparable to our surrogate in terms of cost. However, the decreased computational cost comes with a decreased accuracy.

In particular, the coarsened results with deformation were wildly inaccurate due to droplets unphysically merging into ``peanut'' shapes due to the low mesh resolution (interfaces can merge if they intersect a single mesh element). Once merged, droplet pairs are pulled together by surface tension, thus increasing trajectory errors at further times. On the other hand, the errors in the non-deformable, coarsened test were only roughly twice as large as in the non-deformable, non-coarsened simulation. This seems to suggest that coarsening might be a beneficial strategy for time savings as long as droplets are handled as rigid bodies. However, we must mention that, in order to achieve this level of accuracy, we had to manually ``turn off'' surface tension to prevent the above-mentioned persistent errors due to merging, a strategy that is physically justified only if droplets are circular, cf.~\cref{sec:surrogate}.

\subsection{Scaling test}\label{sec:num:scaling}

As a last numerical experiment, we investigate the scaling of our method, in terms of both accuracy and efficiency, as the number of droplets $M$ increases. To this aim, we consider the static problem \cref{eq:cutfem} over a rectangular domain with $10^2\leq M\leq10^4$ droplets at (quasi-uniform) random locations. The droplets' radii scale as $r_i=\mathcal O(\sqrt M)$ to keep the ``volume fraction'' $|\Omega_B|/|\Omega|$ constant at about $20\%$. For simplicity, the flow is driven by parabolic inflow and outflow profiles at the left and right boundaries, respectively. See \cref{fig:scaling_qual}.

For each $M$, the reference flow field is computed via CutFEM on a grid with $\mathcal O(M)$ degrees of freedom. Note that the DoF scaling implies that the computational cost of the CutFEM solver scales approximately as $\mathcal O(M)$, assuming an ideal cost of the linear-system solver. Such mesh refinement is needed in CutFEM in order to resolve the gaps between droplets but is optional in our method. Due to the high memory requirements for large $M$ (e.g., $\sim200$ GB for $M=5000$), the CutFEM simulations are run on a cluster (4 MPI nodes with an AMD EPYC 7742 CPU each).

The reference results are compared to the field produced by \cref{algo:full}, which can be run on our workstation due to the much lower RAM requirements (e.g., $\sim40$ GB for $M=5000$, which can be drastically lowered at the cost of efficiency by reducing caching). For a fair comparison, after the surrogate is ``trained'' through \cref{algo:full}, we evaluate the surrogate flow field on a fine grid with $n_{\text{eval}}=\mathcal O(M)$ points, namely, the locations of the degrees of freedom in CutFEM. Incidentally, note that this large amount of evaluations is needed to compare surrogate and reference flows in the context of an error analysis, although it might be excessive in practical applications.

\begin{figure}[tb!]
    \centering
    \includegraphics{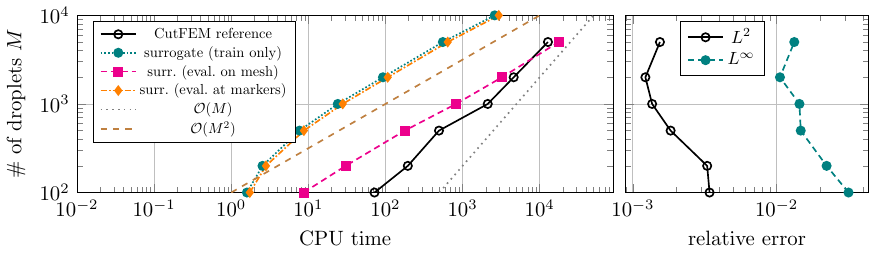}\vspace{-1mm}
    \caption{Left: timing of surrogate training, compared to CutFEM assembly and solve. Total runtimes of two surrogate pipelines: surrogate field evaluation at all CutFEM degrees of freedom (\cref{sec:field}) and at marker points (\cref{sec:time}). Right: global relative approximation error in the velocity field.}
    \label{fig:scaling}
\end{figure}

\begin{figure}[tb!]
    \centering
    \includegraphics{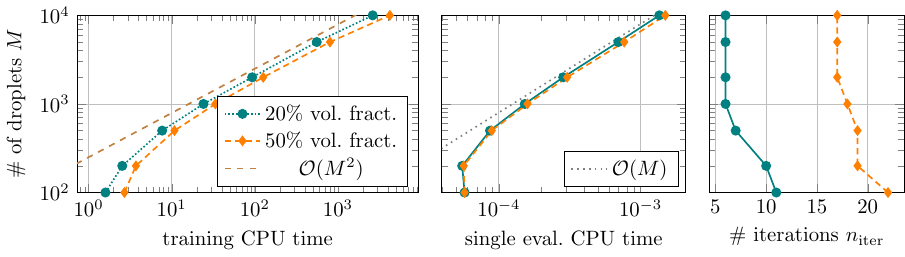}\vspace{-1mm}
    \caption{Comparison of low and high volume fraction. Left: timing of surrogate training. Center: timing of each surrogate point-evaluation. Right: number of fixed-point iterations needed to attain $\textup{tol}=10^{-3}$.}
    \label{fig:scaling_dense}
\end{figure}

The results are summarized in \cref{fig:scaling}. Therein, we can observe that, given the large number of evaluation points $n_{\text{eval}}$, the surrogate evaluation takes almost $100$ times longer than the surrogate training. In agreement with the analysis in \cref{sec:timing}, the surrogate evaluation at $\mathcal O(M)$ points takes $\mathcal O(M^2)$ time, with each point-evaluation taking $\mathcal O(M)$ time.

We observe the same quadratic scaling also in the surrogate training for large $M$, which suggests that (i) the cost of solving Stokes problems ($\text{S\_boundary}$ in \cref{algo:full}) becomes negligible for large $M$, as suggested in \cref{sec:field}, and (ii) the number of fixed-point iterations $n_{\text{iter}}$ remains roughly constant as $M$ increases. In our experiments, we actually observed a decrease in $n_{\text{iter}}$ as $M$ increases, cf.~\cref{fig:scaling_dense} (right). This indicates that, at least in this setting with a fixed volume fraction, interaction effects between smaller droplets are easier to approximate through our method's fixed-point iterations. As partial evidence supporting this, we note that the approximation error generally decreases as $M$ increases even though $D$ is kept constant, cf.~\cref{fig:scaling} (right).

Including also the surrogate evaluation, our algorithm is more efficient than the reference CutFEM for $M<5000$. At $M=5000$ the surrogate's quadratic scaling finally catches up with the almost linear cost of CutFEM. On the other hand, if one excludes the surrogate evaluation, our method is several orders of magnitude more efficient than the reference technique for all tested values of $M$. This means that our surrogate-modeling strategy is competitive if a sufficiently small(er) number of evaluations is required. For instance, this happens when the target problem is a transient time evolution as in \cref{sec:time}, which requires evaluating the surrogate field at $100M$ total marker points (when $n_{\text{adv}}=100$), rather than at the $\sim6700M$ CutFEM degrees of freedom. In both cases, $n_{\text{eval}}$ scales linearly in $M$, but with vastly different proportionality constants. The corresponding timings are also included in \cref{fig:scaling} (left) and \cref{fig:scaling_dense} (center).

In order to show the effect of higher volume fraction on our method's efficiency, we repeat our experiment with larger droplets, corresponding to $|\Omega_B|/|\Omega|\simeq 50\%$. This results in stronger interactions between droplets, making more fixed-point iterations necessary to attain the same tolerance $\textup{tol}$. This is shown in \cref{fig:scaling_dense} (right), where we notice that $n_{\textup{iter}}$ seems to stabilize at a (volume-fraction-dependent) value as $M$ increases. This value represents a sort of ``homogenized'' (for $M\to\infty$ with fixed volume fraction) fixed-point complexity for the corresponding volume fraction. Since a larger volume fraction generally yields smaller gaps between droplets as well as between droplets and domain boundaries, this result agrees with those presented in terms of contraction rate $\rho_j$ in \cref{fig:iter}.

Despite this difference in number of iterations, the surrogate training costs increases only slightly with the volume fraction, cf.~\cref{fig:scaling_dense} (left). This is because the most expensive steps in \cref{algo:full}, namely, the $\mathcal O(M^2N)$ evaluations of the fundamental solutions $\{\mathcal S(\widehat\vphi_k)\}_{k=1}^N$ in \cref{eq:basiss}, are cached at the first fixed-point iteration for efficiency, making the computational cost almost independent of the number of iterations. The surrogate evaluation costs are basically the same for both volume fractions, cf.~\cref{fig:scaling_dense} (center).

This test provides further evidence of how our method's computational cost does not drastically blow up when the droplet interactions strengthen as a result of small gaps between droplets, e.g., those that might naturally arise in a time-domain simulation involving \cref{eq:cutfemtime}. On the other hand, more standard discretization methods, e.g., based on CutFEM or boundary-integral equations, would face great difficulties in dealing with such small droplet gaps. While some evidence on this was provided in \cref{sec:timetest}, we believe it would be of interest to carry out further, more detailed comparisons of our algorithm with other state-of-the-art methods.

\section{Conclusions}
We have presented a multi-fidelity surrogate modeling strategy for Stokes flows with large numbers of suspended droplets. The approach alternates between a background Stokes solver and precomputed single-droplet disturbance fields, using an iterative error-PDE correction to systematically capture droplet–flow, droplet–boundary, and droplet–droplet interactions. For geometrically similar droplets, an offline–online decomposition enables efficient reuse of single-droplet solutions. Numerical experiments demonstrate that the method maintains good accuracy across a wide range of configurations, including systems with up to $10^4$ droplets. Even within a non-optimized implementation, the surrogate yields substantial speed-ups over fully resolved CutFEM simulations, demonstrating its potential to bring complex multiphase flow simulations within the reach of standard workstations, rather than requiring supercomputers.

The present framework naturally leads to several directions for future research. First, the fixed-point iteration underlying the method performed robustly in all tests, but a theoretical convergence analysis is missing.  Second, extending the framework to deformable droplets appears to be within reach, as outlined in \cref{sec:extensions}. Third, reducing the quadratic surrogate cost through hierarchical approximations of long-range droplet interactions inspired by fast-multipole methods, as outlined in \cref{rem:dd}, is a promising direction for further improving efficiency. Fourth, exploring whether aspects of the multi-fidelity framework  can be generalized to Navier–Stokes flows is an interesting but very challenging direction.

\section*{Acknowledgements}
This research was funded in part by the Wallenberg Academy Fellow grants KAW 2019.0190 and KAW 2024.0234.

Part of the computations were enabled by resources provided by the National Academic Infrastructure for Supercomputing in Sweden (NAISS), funded by the Swedish Research Council.

\appendix
\renewcommand{\thesubsection}{\Alph{subsection}}

\section*{Appendix}
\subsection{Proof of \cref{prop:correctfull}}\label{app:proof}
The proof proceeds by induction on $j$, by showing that, for all $j$, the claim from \cref{prop:correctfull} holds, in addition to
\begin{equation}\label{eq:correctfullextra}
    \begin{cases}
        -\nabla\cdot\uT(\vu_{j+1/2},p_{j+1/2};\mu)=\vforce+2(\mu-\mu_F)\nabla\cdot\uE(\vv_{j+1/2}-\vphi_{j+1/2})&\text{in }\Omega_B\cup\Omega_F,\\
        \nabla\cdot\vu_{j+1/2}=0&\text{in }\Omega_B\cup\Omega_F,\\
        \jump{\vu_{j+1/2}}=\vzero&\text{on }\Gamma,\\
        \jump{\uT(\vu_{j+1/2},p_{j+1/2};\mu)\vn}=\left(\gamma\kappa\uI-2\jump{\mu}\uE(\vv_{j+1/2}-\vphi_{j+1/2})|_{\Omega_B}\right)\vn&\text{on }\Gamma,
    \end{cases}
\end{equation}
with $(\vu_{j+1/2},p_{j+1/2})$ from \cref{algo:full}.

\paragraph{Base case for $j=0$.} We initialize $\vphi_{-1/2}=\vv_{-1/2}=\vu_0$. The first claim, involving $(\vv^{(0)},q^{(0)})$, follows by subtracting \cref{eq:fem} and \cref{eq:fematgamma} from \cref{eq:cutfem} equation by equation, cf.~the inductive step.

In addition, we can show that \cref{eq:correctfullextra} holds for $j=0$ by linearity and by definition of $\vphi_{1/2}$. For instance, fix one of the interfaces $\Gamma_i$, $i=1,\ldots,M$. On $\Gamma_i$, by definition of $\vv_{0,i'}$ for $i'\neq i$, we have
\begin{align*}
    \jump{\uT(\vv_{0,i'},q_{0,i'};\mu)\vn}=&\jump{\uT(\vv_{0,i'},0;\mu-\mu_F)\vn}+\jump{\uT(\vv_{0,i'},q_{0,i'};\mu_F)\vn}\\
    =&2(\mu_F-\mu|_{\Omega_B^i})\uE(\vv_{0,i'})|_{\Omega_B^i}\vn+\vzero=2\jump{\mu}\uE(\vv_{0,i'})|_{\Omega_B^i}\vn,
\end{align*}
whereas, by definition of $\vv_{0,i}$, cf.~\cref{eq:singledropfull},
\begin{equation*}
    \jump{\uT(\vv_{0,i},q_{0,i};\mu)\vn}=-2\jump{\mu}\uE(\vphi_{-1/2})|_{\Omega_B^i}\vn.
\end{equation*}
From here we can use the previous claim and the definitions of $\vu$ and $\vphi_{1/2}$ to show that
\begin{align*}
    \jump{\uT(\vu_{1/2},p_{1/2};\mu)\vn}=&\jump{\uT(\vu_0,p_0;\mu)\vn}+\sum_{i'=1}^M\jump{\uT(\vv_{0,i'},q_{0,i'};\mu)\vn}\\
    =&\jump{\uT(\vu,p;\mu)\vn}-\jump{\uT(\vv^{(0)},q^{(0)};\mu)\vn}+\jump{\uT(\vv_{0,i},q_{0,i};\mu)\vn}+\sum_{i'\neq i}\jump{\uT(\vv_{0,i'},q_{0,i'};\mu)\vn}\\
    =&\gamma\kappa\vn+2\jump{\mu}\uE(\vphi_{-1/2})|_{\Omega_B^i}\vn-2\jump{\mu}\uE(\vphi_{-1/2})|_{\Omega_B^i}\vn+2\jump{\mu}\sum_{i'\neq i}\uE(\vv_{0,i'})|_{\Omega_B^i}\vn\\
    =&\gamma\kappa\vn+2\jump{\mu}\uE({\textstyle\sum_{i'\neq i}\vv_{0,i'}})|_{\Omega_B^i}\vn=\gamma\kappa\vn+2\jump{\mu}\uE(\vphi_{1/2}-\vv_{1/2})|_{\Omega_B^i}\vn,
\end{align*}
with the final step following by inspection of $\vphi_{1/2}|_{\Omega_B^i}$, cf.~\cref{rem:phi}. A similar derivation can be carried out also within the droplet $\Omega_B^i$.

\paragraph{Inductive step for $j>0$.} The claim in \cref{prop:correctfull}, involving $(\vv^{(j)},q^{(j)})$, can be proved by leveraging the linearity of the PDE and the inductive hypothesis on both the main claim of \cref{prop:correctfull} and \cref{eq:correctfullextra}. For instance, by construction of $\vv_{j-1/2}$, on $\Gamma$ we have
\begin{align*}
    \jump{\uT(\vv_{j-1/2},q_{j-1/2};\mu)\vn}=&\jump{\uT(\vv_{j-1/2},0;\mu-\mu_F)\vn}+\jump{\uT(\vv_{j-1/2},q_{j-1/2};\mu_F)\vn}\\
    =&2(\mu_F-\mu|_{\Omega_B})\uE(\vv_{j-1/2})|_{\Omega_B}\vn+\vzero=2\jump{\mu}\uE(\vv_{j-1/2})|_{\Omega_B}\vn,
\end{align*}
so that, by inductive assumption,
\begin{align*}
    \jump{\uT(\vu_j,p_j;\mu)\vn}=&\jump{\uT(\vu_{j-1/2},p_{j-1/2};\mu)\vn}+\jump{\uT(\vv_{j-1/2},q_{j-1/2};\mu)\vn}\\
    =&\gamma\kappa\vn+2\jump{\mu}\uE(\vphi_{j-1/2}-\vv_{j-1/2})|_{\Omega_B}\vn+2\jump{\mu}\uE(\vv_{j-1/2})|_{\Omega_B}\vn\\
    =&\gamma\kappa\vn+2\jump{\mu}\uE(\vphi_{j-1/2})|_{\Omega_B}\vn.
\end{align*}
Recalling the definition of $(\vu,p)$, we deduce that, on $\Gamma$,
\begin{equation*}
    \jump{\uT(\vv^{(j)},q^{(j)};\mu)\vn}=\jump{\uT(\vu,p;\mu)\vn}-\jump{\uT(\vu_j,p_j;\mu)\vn}=-2\jump{\mu}\uE(\vphi_{j-1/2})|_{\Omega_B}\vn.
\end{equation*}
A similar derivation can be carried out also within the droplet $\Omega_B^i$.

From here, by following the same steps as in the base case we can show that \cref{eq:correctfullextra} holds true. This is sufficient for the next induction step.

\subsection{Change of coordinates in second-fidelity problem}\label{app:change}
For an arbitrary $\vx_i\in\bR^d$ and $r_i>0$, define the affine function $\eta_i(\vx)=r_i\vx+\vx_i$, which maps the unit disk $\Omega_B^\sharp$ to $\Omega_B^i$. We consider problem \cref{eq:singledroplet} with $\Omega_B^i=B(\vx_i;r_i)$, $\vphi\in[H^1(\Omega_B^i)]^d$ smooth enough, and $\mu=\mu_F+(\mu(\vx_i)-\mu_F)\chi_{\Omega_B^i}$.

Let $\widehat\vv:=\vv\circ\eta_i$, $\widehat q:=r_i q\circ\eta_i$, $\widehat\mu:=\mu\circ\eta_i$, and $\widehat\vphi:=\vphi\circ\eta_i$. Moreover, by the chain rule, let $\widehat\nabla:=r_i\nabla$ and $\widehat\uT(\widehat\vv(\cdot),\widehat q(\cdot);\widehat\mu)=r_i\uT(\vv(\cdot),q(\cdot);\mu)\circ\eta_i$, which involve differential operators in the reference coordinates $\eta_i^{-1}(\vx)$. It follows that
\begin{equation*}
    \begin{cases}
        -r_i^{-1}\widehat\nabla\cdot\left(r_i^{-1}\widehat\uT(\widehat\vv,\widehat q;\widehat\mu)\right)=(\widehat\mu-\mu_F)r_i^{-1}\widehat\nabla\cdot\left(r_i^{-1}\widehat\uE(\widehat\vphi)\right)&\textup{in }\bR^d\setminus\partial \Omega_B^\sharp,\\
        r_i^{-1}\widehat\nabla\cdot\widehat\vv=0&\textup{in }\bR^d\setminus\partial \Omega_B^\sharp,\\
        \jump{\widehat\vv}=\vzero&\textup{on }\partial \Omega_B^\sharp,\\
        \jump{r_i^{-1}\widehat\uT(\widehat\vv,\widehat q;\widehat\mu)\vn}=r_i^{-1}\left(\gamma\kappa\uI-\jump{\widehat\mu}\widehat\uE(\widehat\vphi)\right)\vn&\textup{on }\partial \Omega_B^\sharp,\\
        \lim_{\norm{\vx}\to\infty}\widehat\vv(\vx)=\vzero.
    \end{cases}
\end{equation*}
After canceling out the powers of $r_i$, we see that this problem is equivalent to \cref{eq:singledroplet} with $\Omega_B^i=\Omega_B^\sharp$, $\widehat\vphi$ as forcing flow field, and $\mu=\widehat\mu$.

\subsection{Polynomial basis in dimension two}\label{app:poly}
In \cref{algo:full}, we only apply $\text{S\_droplet}$, i.e., $\mathcal S$ from \cref{eq:calS}, to fields $\vphi$ that satisfy the homogeneous Stokes equation over $\Omega_B^i$, i.e., that belong to the space of smooth, \emph{compatible} flow fields
\begin{equation*}
    \mathcal V=\left\{\vv\in[C^3(\bR^d)]^2,\nabla\cdot\vv\equiv0,\nabla\times\nabla\cdot\uE(\vv)\equiv0\right\}.
\end{equation*}
The last condition above encodes a compatibility condition, ensuring that, for each element $\vv\in\mathcal V$, one can find a pressure field $q\in C^2(\bR^d)$ satisfying the homogeneous Stokes equation $\nabla\cdot\uT(\vv,q;\mu_F)=\vzero$ \cite{Muskhelishvili1953,happel_low_1983}. To compute a (polynomial) basis of $\mathcal V$ in $d=2$ dimensions, we use stream functions.

Recall that, in $\bR^2$, $\mathcal V=\nabla^\perp\mathcal W$, with $\nabla^\perp=[-\partial_{x_2},\partial_{x_1}]^\top$ and with $\mathcal W$ being the space of biharmonic scalar fields \cite{happel_low_1983}
\begin{equation*}
    \mathcal W=\left\{\phi\in C^4(\bR^2),\nabla\cdot\nabla(\nabla\cdot\nabla\phi)\equiv0\right\}.
\end{equation*}
A hierarchical polynomial basis of $\mathcal W$ can be easily constructed as \cite{Muskhelishvili1953}
\begin{equation*}
    \mathcal W=\text{span}\bigcup_{n=0}^\infty\left\{\rho\psi\in\mathbb P^n(\bR^2),\text{deg}(\rho\psi)=n,\rho(\vx)\in\{1,x_1^2+x_2^2\},\nabla\cdot\nabla\psi\equiv0\right\}.
\end{equation*}
In turn, harmonic polynomials $\psi$ on $\bR^2$ are easily expressible through holomorphy arguments as real or imaginary parts of polynomials in $(x_1+\textup{i}x_2)$. Accordingly, a hierarchical (in total degree) basis of $\mathcal W$ is found:
\begin{multline*}
    \mathcal W=\text{span}\bigg(\left\{1\right\}\cup\left\{x_1,x_2\right\}\cup\left\{x_1^2-x_2^2,2x_1x_2,x_1^2+x_2^2\right\}\cup\\\cup\bigcup_{n=3}^\infty\left\{\text{Re}\left((x_1+\textup{i}x_2)^n\right),\text{Im}\left((x_1+\textup{i}x_2)^n\right),(x_1^2+x_2^2)\text{Re}\left((x_1+\textup{i}x_2)^{n-2}\right),(x_1^2+x_2^2)\text{Im}\left((x_1+\textup{i}x_2)^{n-2}\right)\right\}\bigg).
\end{multline*}
From here, a polynomial basis for the space $\mathcal V=\nabla^\perp\mathcal W$ of compatible flow fields can be derived by linearity of the differential operator involved. 

Note that, since differentiations is involved to extract flow fields from stream functions, and since $\vphi$ enters \cref{eq:singledroplet} only through its strain field $\uE(\vphi)$, constant and linear basis functions of $\mathcal W$ are irrelevant and can be discarded. Additionally, the quadratic stream function $\phi(\vx)=x_1^2+x_2^2$ yields a strain-free rotation field and can thus be ignored:
\begin{equation*}
    \uE\left(\nabla^\perp(x_1^2+x_2^2)\right)=\uE\left([-2x_2,2x_1]^\top\right)=\begin{pmatrix}0&0\\0&0\end{pmatrix}.
\end{equation*}
Truncating the basis of $\mathcal W$ at degree $(D+1)$, with $D\geq1$, yields a degree-$D$ basis for $\mathcal V$ with dimension $4D-2$.

When precomputing the fundamental solutions corresponding to the above basis functions, the computational effort may be reduced by exploiting the symmetry of stream functions. This can be used to cut the precomputation time in half.

\subsection{CutFEM setup}\label{app:cutfem}
Denote $\mcK_h$ a regular partition of $\Omega$ which does not need to conform to the interfaces $\Gamma$ inside $\Omega$ but conforms to the polygonal boundary $\dw$. Associated with the subdomain $\Omega_B$ and $\Omega_F$, and with the interfaces $\Gamma$, we define the active meshes:
\begin{equation*}
	\mcK_{h,\bullet} = \left\{K \in \mcK_h,\ K \cap  \Omega_i \neq \emptyset\right\}, \ \bullet=B,F, \quad\text{and}\quad \mcK_{h,\Gamma} = \left\{K \in \mcK_h,\ |\Gamma \cap \bar{K}| > 0\right\}.
\end{equation*}
From the active meshes, we define the active domain 
\begin{equation*}
	\Omega_{h,\bullet} = \bigcup_{K \in \mcK_{h,\bullet}} K, \quad\bullet=B,F
\end{equation*}
and the approximate interface $\Gamma_h$ as a piecewise-polynomial approximation of $\Gamma$.

Let $\mcV_h^\vu$ and $\mcV_h^p$ be the Taylor-Hood elements P2-P1 on $\mcK_h$. The active finite-element space is then the restriction to the active domain:
\begin{equation*}
	\mcV_{h,\bullet}^\vu = \mcV_h^\vu|_{\Omega_{h,\bullet}}\quad\text{and}\quad\mcV_{h,\bullet}^p = \mcV_h^p|_{\Omega_{h,\bullet}} , \quad \bullet=B,F, 
\end{equation*}
and we let
\begin{align*}
V_h^\vu &= \mcV_{h,B}^\vu \times \mcV_{h,F}^\vu = \left\{ \vv_h = (\vv_{h,B}, \vv_{h,F}) , \ \vv_{h,\bullet}\in \mcV_{h,\bullet}^\vu, \ \bullet=B,F \right\},\\
V_h^p&   = \mcV_{h,B}^p \times \mcV_{h,F}^p = \left\{ q_h = (q_{h,B}, q_{h,F}) , \ q_{h,\bullet}\in \mcV_{h,\bullet}^p, \ \bullet=B,F \right\}.
\end{align*}

The weak formulation of \eqref{eq:cutfem} is: 
find $(\vu_h, p_h) \in V_h^{\vu} \times V_h^{p}$ such that
\begin{equation*}
a(\vu_h, \vv_h) + \tau_\vu s_\vu( \vu_h, \vv_h) + b(\vu_h, q_h) - b(\vv_h, p_h) + \tau_p s_p(p_h,q_h) = l(\vv_h, q_h)\quad\forall (\vv_h, q_h) \in V_h^{\vu} \times V_h^{p},
\end{equation*}
Above,
\begin{align*}
	a(\vu_h,\vv_h) & = \sum_{\bullet\in\{B,F\}}\prodscal{2\mu\uE(\vu_h)}{\uE(\vv_h)}_{\Omega_{h,\bullet}}
                     - \prodscal{\lbrace 2\mu \uE(\vu_h)  \vn \rbrace}{ \jump{\vv_h}}_{\Gamma_h}
                     - \prodscal{\jump{\vu_h}}{\lbrace 2\mu \uE(\vv_h)  \vn \rbrace}_{\Gamma_h}\\
                   & + \lambda_{\Gamma}\prodscal{\jump{\vu_h}}{\jump{\vv_h}}_{\Gamma_h}
                     - \prodscal{2\mu \uE(\vu_h)  \vn}{\vv_h}_{\Gamma_D}
                     - \prodscal{\vu_h}{2\mu \uE(\vv_h)  \vn}_{\Gamma_D}
                     + \lambda_D\prodscal{\vu_h}{\vv_h}_{\Gamma_D},\\
	b(\vu_h,q_h) & = \sum_{\bullet\in\{B,F\}}\prodscal{\nabla \cdot \vu_h}{q_h}_{\Omega_{h,\bullet}}
                 - \prodscal{\jump{\vu_h \cdot \vn}}{ \lbrace q_h\rbrace}_{\Gamma_h}
                 - \prodscal{\vu_h \cdot \vn}{q_h}_{\Gamma_D},\\
	l(\vv_h,q_h) & = \sum_{\bullet\in\{B,F\}}\prodscal{\vforce}{\vv_h}_{\Omega_h,\bullet}
                 + \prodscal{\gamma \kappa \vn}{\lbrace\vv_h\rbrace}_{\Gamma_h}
                 - \prodscal{\vf}{\vv_h}_{\Gamma_N}\\
                 &- \prodscal{\vg }{ 2\mu \uE(\vv_h) \vn}_{\Gamma_D}
                 + \lambda_D\prodscal{\vg }{ \vv_h}_{\Gamma_D}
                 - \prodscal{\vg \cdot \vn }{q_h}_{\Gamma_D},
\end{align*}
with $(\cdot,\cdot)_A$ the $L^2(A)$-inner product and $\lbrace f\rbrace=\frac12(f|_{\Omega_B}+f|_{\Omega_F})$ the average across $\Gamma_h$.

Given the set of edges $\mathcal E_{h,\bullet}$ of each active mesh $\mcK_{h,\bullet}$, we define the stabilization terms $s_\vu$ and $s_p$ as 
\begin{align*}
s_\vu(\vu_h,\vv_h) & = \sum_{\bullet\in\{B,F\}} \sum_{e \in \mathcal E_{h,\bullet}}\left(h\prodscal{\jump{\partial_{\vn_e}\vu_{h,\bullet}}_e}{\jump{\partial_{\vn_e}\vv_{h,\bullet}}_e}_e + h^3\prodscal{\jump{\partial_{\vn_e}^2\vu_{h,\bullet}}_e}{\jump{\partial_{\vn_e}^2\vv_{h,\bullet}}_e}_e\right),\\
s_p(p_h,q_h) & = \sum_{\bullet\in\{B,F\}} \sum_{e \in \mathcal E_{h,\bullet}}h^3\prodscal{\jump{\partial_{\vn_e}p_{h,\bullet}}_e}{\jump{\partial_{\vn_e}q_{h,\bullet}}_e}_e,
\end{align*}
with $\vn_e$ the unit normal to edge $e$ and $\jump{\cdot}_e$ the jump across edge $e$, both with arbitrary but fixed sign.

\bibliographystyle{plain}
\bibliography{bibliography}

\end{document}